\newtheorem{theorem}{Theorem}[section]
\newtheorem{lemma}[theorem]{Lemma}
\newtheorem{proposition}[theorem]{Proposition}
\theoremstyle{definition}
\newtheorem{definition}[theorem]{Definition}
\newtheorem{remark}[theorem]{Remark}
\newtheorem{example}[theorem]{Example}
\newcommand{\restrict}{\,{\mathbin{\vert\mkern-0.3mu\grave{}}}\,}
\newcommand{\remove}[1]{}
\DeclareMathOperator{\Rn}{{\mathbb R^{\it n}}}
\DeclareMathOperator{\R2}{{\mathbb R^{2}}}
\DeclareMathOperator{\riesz}{\mathcal R}
\DeclareMathOperator{\conv}{\rm conv}
\DeclareMathOperator{\I}{[0,1]}
\DeclareMathOperator{\cube}{[0,1]^{\it n}}
\DeclareMathOperator{\aff}{\rm aff}
\DeclareMathOperator{\interior}{\rm int}
\DeclareMathOperator{\relint}{\rm relint}
 \title[Severi-Bouligand tangents, Frenet frames, Riesz spaces]
{Severi-Bouligand tangents, 
Frenet frames  and Riesz spaces}
\author{\sc Leonardo Manuel Cabrer and Daniele Mundici}
 \thanks{This research was supported by a 
 Marie Curie Intra European Fellowship 
 within the 7th European Community Framework 
 Program (ref. 299401-FP7-PEOPLE-2011-IEF)} 
\address[L.M. Cabrer]{Department of Statistics,
Computer Science and Applications,  ``Giu\-sep\-pe Parenti''\\ 
University of Florence\\
Viale Morgagni 59 --
50134\\ Florence \\
Italy}
\email{ l.cabrer@disia.unifi.it }
\address[D. Mundici]{Department of
Mathematics and Computer Science  ``Ulisse Dini'' \\
University of Florence\\
Viale Morgagni 67/A \\
I-50134 Florence \\
Italy}
\email{ mundici@math.unifi.it }
\date{\today}
\begin{document}

\thanks{2000 {\it Mathematics Subject Classification.}
Primary: 46A40   Secondary:     46G05,  49J52, 52A20, 52B11,
53A04, 57Q05, 65D15}
\keywords{Riesz space, 
Severi-Bouligand tangent,
Bouligand-Severi tangent,
vector lattice, 
Frenet frame, 
Yosida representation,    
sampling sequence, 
simplicial complex, 
piecewise linear function, 
polyhedron,  MV-algebra,
lattice-ordered abelian
group, strong unit}

   \begin{abstract}
A compact set $X\subseteq \R2$
   has an outgoing  Severi-Bouligand  tangent unit vector  $u$ 
   at  some point $x\in X$    iff  some principal
   quotient of  the Riesz space  $\riesz(X)$
   of  piecewise linear functions on $X$
   is not archimedean.
To generalize this preliminary result, we
extend the classical definition of Frenet $k$-frame to any
sequence  $\{x_i\}$  of points in $\Rn$ 
converging to a point $x$, in such a way that when
the $\{x_i\}$ arise as sample points of a smooth curve $\gamma,$
the Frenet $k$-frames of  $\{x_i\}$ and of  $\gamma$
 at $x$  coincide.
Our method of computation of Frenet frames via sample sequences 
of $\gamma$ does not require
 the knowledge of any higher-order derivative
of $\gamma$.  Given a compact set 
$X\subseteq \Rn$ and a point $x\in\Rn$, a 
 Frenet $k$-frame  $u$ is  said to be
a  {\it tangent} of $X$ at $x$   if
$X$ contains a sequence $\{x_i\}$ 
converging to $x$, 
 whose
  Frenet $k$-frame   is  $u$.
We   prove that   $X$ has an outgoing 
  $k$-dimensional  tangent of $X$ iff
some principal  quotient of  $\riesz(X)$ is not
archimedean. 
If, in addition,  $X$ is convex, then
$X$ has no outgoing tangents iff  it is a polyhedron. \end{abstract}

 \maketitle

\section{Introduction}
In \cite[\S 53, p.59 and p.392]{sev0} and
 \cite[\S1, p.99]{sev},
 Severi defined  (outgoing) 
tangents  
 of   arbitrary subsets of the euclidean space 
$\Rn$.
 Subsequently  and independently, 
  Bouligand  defined the same notion
  \cite[p.32]{bou}, which today is widely known
  as ``Bouligand tangent''.  Throughout we will adopt the
  following equivalent definition, 
 where  $||\cdot||$ denotes  euclidean norm
 and $\conv(Y)$ is the convex hull of  $Y\subseteq \Rn$:

 \begin{definition}\cite[pp.14 and 133]{mor}\,\,\,\,
 \label{definition:severi}
 Let $\emptyset\not=X\subseteq \Rn$ and
 $x\in \Rn$.
 A unit vector $u \in \Rn$
 is a  {\it Severi-Bouligand tangent  
of $X$ at $x$}  if $X$ contains a sequence  
$\{x_i\}$ such that
$x_i\not=x$ for all $i$,\,\,\,
 $\lim_{i\to \infty } x_i = x,\,\,$ 
\,\,and 
$\lim_{i\to \infty } {(x_i-x)}/{||x_i-x||} =u.$
 If  for some
$\mu> 0,\,\,\,$   $\conv(x,x+\mu u)\cap X=\{x\}$, 
we say that  
$u$  is  {\it outgoing}.  
   \end{definition}


For an equivalent algebraic handling of tangents,
in Section  \ref{Sec:Main} we introduce
the  Riesz space (=vector lattice)  
  $\riesz(X)$  of  piecewise linear   functions on 
 any nonempty compact set $X\subseteq \mathbb R^n$.
 When $n=2$, the geometric properties of $X$ are 
  immediately  linked
  to the  algebraic properties of  
  $\riesz(X)$ by the following elementary result
  (Lemma   \ref{lemma:busmun}):
  {\it  If  $\riesz(X)$
 has a  non-archimedean principal quotient
then 
$X$ has  an outgoing Severi-Bouligand  tangent. }

In Theorem \ref{theorem:main} we will extend
  this result, as well as its converse, to
  all $n$. To this purpose,
in Section \ref{section:frenet}    
we introduce the notion of
a  Frenet $k$-frame {\it of a sequence}
$\{x_i\}$
of points in $\Rn$, as the
natural generalization of the classical Frenet (Jordan)
$k$-frame \cite{kue, jor} of a curve $\gamma$. 
Specifically, 
if  the $x_i$ arise as sample points
of  a smooth curve $\gamma$ 
accumulating at some point $x$ of $\gamma$, then the 
Frenet $k$-frame
of $\{x_i\}$ coincides with the Frenet 
$k$-frame of $\gamma$
at $x$.  This is   Theorem \ref{theorem:eguali}. 
The  proof  {\it     
yields a method to calculate
the  Frenet $k$-frame of a 
$C^{k+1}$ curve  $\gamma$ at a point $x$  
without knowing the  derivatives
of any
  parametrization of  $\gamma$}:  
  one  just takes
a  sampling sequence  
$\{x_i\}$ of points of $\gamma$ converging to $x$,
 and then makes the 
 linear algebra calculations as in the 
 proof of the theorem.
To show the wide applicability of our method,
Example \ref{example:befana} provides a  
 curve $\gamma$ having no Frenet  $k$-frame at a point
$x$, but such that the Frenet $k$-frame of 
each sequence of points of $\gamma$
converging to $x$  exists and is
independent of the parametrization of $\gamma.$

In Section \ref{section:pwl}  we deal with
the relationship between the Frenet $k$-frame
$u=(u_1, \dots , u_k )$
of  a sequence
   $\{x_i\}$  in $\Rn$ 
  converging to $x$,
  and any simplex
   $\,\,T \subseteq \Rn$ 
containing  $\{x_i\}$.
 Theorem \ref{theorem:contains-u-simplex}
shows that
$\,T$ automatically
contains  the simplex
$\,\,\conv(x,x+\lambda_1u_1,\ldots, x+\lambda_1u_1
+\cdots+\lambda_ku_k),$
 for some 
$\lambda_1, \dots , \lambda_k>0.$
This elementary result will find repeated use in the rest of the paper.

As 
a $k$-dimensional generalization of 
the classical Severi-Bou\-li\-gand  tangents, 
we  then say that a
 Frenet $k$-frame  $u$ is  {\it tangent} at $x$ to a compact
set $X\subseteq \Rn$  if
$X$ contains a 
sequence
  $\{x_i\}$ 
converging to $x$, 
 whose
  Frenet $k$-frame   is  $u$.
Then Theorem \ref{theorem:main} 
provides  the desired
strengthening of
Lemma \ref{lemma:busmun}, 
showing  that   $X$ has no outgoing tangent
iff   
 {\sl  every principal ideal of $\mathcal R(X)$ 
  is an intersection of maximal ideals.}
  This latter property 
  is  considered  in the literature for various classes
  of structures:   
For  commutative noetherian rings  it
  is known as  ``von Neumann regularity'';
frames  having this property
are known as ``Yosida frames'',  \cite[2.1]{marzen}; 
 Chang MV-algebras  with this property
  are said to be ``strongly semisimple'',  \cite{busmun}.
  As a corollary of Stone
  representation (\cite[4.4]{luxzaa}),
   every boolean algebra 
   is strongly semisimple.

    Since   $\{+,-,\wedge,\vee\}$-reducts 
of  Riesz spaces with
    strong unit are lattice-ordered abelian groups with strong unit,
    and the latter are categorically equivalent to     MV-algebras,
    \cite[3.9]{mun-jfa}, following 
    \cite{busmun} 
    we  say that a Riesz space $R$ is {\it strongly semisimple}
if  every principal ideal of $R$ 
  is an intersection of maximal ideals of $R$. Equivalently,
  every principal quotient of $R$ is archimedean.
  A large class of examples of strongly semisimple
  Riesz spaces with totally disconnected maximal
  spectrum is immediately provided by hyperarchimedean
  Riesz spaces, \cite{balmar}. 
  At the other extreme, when $X$ is a polyhedron,
   $\mathcal R (X)$ is strongly semisimple,
 (see Proposition  \ref{proposition:Poly}). 

Using Theorem \ref{theorem:main}, in
 Theorem \ref{theorem:convex}
 we prove that a  nonempty compact {\it convex\/}
subset  $X\subseteq\Rn$
has no outgoing tangent
iff  $X$  has only finitely many extreme points iff   $X$ is a polyhedron.
This  shows the naturalness
 of Definition  \ref{definition:tangent} 
  of ``outgoing tangent''  as a
 $k$-dimensional  extension of
 the classical  Severi-Bouligand tangent.
Counterexamples   of
   Theorem \ref{theorem:convex} are  easily found
in case $X$ is not convex (see Example
\ref{counterexample}).

 The  only prerequisite  for this paper is a working knowledge of
 elementary polyhedral topology  (as given, e.g.,  by
 the  first chapters of \cite{sta}), 
 and of
 the classical  Yosida (Kakutani-Gelfand-Stone)
 correspondence between points of $X$ and maximal ideals
 of the  Riesz space $\mathcal R(X)$. See  \cite{luxzaa} for a
 comprehensive account.

 \section{The Frenet frame of a sequence  $\{x_i\}\subseteq \Rn$} 
 \label{section:frenet}

Given two sequences  $\{p_i\},\{q_i\}\subseteq \mathbb R$,
by writing
$
\lim_{i\to \infty}{p_i}/{q_i} =r
$
 we understand that  $q_i\not=0$ for each $i$, and 
  $\,\,\lim_{i\to \infty}p_i/q_i\,\,$  exists and equals $r$.

For any vector $y\in\Rn$ and linear subspace
$L$ of $\Rn$,
the orthogonal projection of $y$ onto  $L$
is denoted  $$\mathsf{proj}_L(y).$$ 
 
 For our generalization of Severi-Bouligand tangents
we first extend   Definition
 \ref{definition:severi},  replacing the unit vector $u\in \Rn$ therein
 by a  $k$-tuple  
 $\{u_1,\ldots,u_k\}$ of pairwise orthogonal
 unit vectors in  $\Rn$.

  \begin{definition}
\label{definition:eigen-frame}
Given a sequence  $\sigma=\{x_i\}$ 
of points in $\Rn$ converging to $x,$  
and a $k$-tuple $(u_1,\ldots,u_{k})$
of pairwise orthogonal unit vectors
 in $\Rn$, we say:
 
\begin{itemize}
\item {\it $u_1$ is the  Frenet $1$-frame of $\sigma$}
 if $u_1=\lim_{i\to\infty} (x_i-x)/||x_i-x||$; 
 
%

\smallskip 
\item {\it $(u_1,\ldots,u_k)$ is the 
 Frenet $k$-frame of $\sigma$} if
 $(u_1,\ldots,u_{k-1})$ is the  Frenet $(k-1)$-frame of $\sigma$, and
$$
u_{k}=\lim _{i\to \infty}\frac{
x_i-x-
\mathsf{proj}_{\mathbb Ru_1+\cdots+\mathbb Ru_{k-1}}(x_i-x)
}
{||x_i-x-
\mathsf{proj}_{\mathbb Ru_1+\cdots+\mathbb Ru_{k-1}}(x_i-x)||}.
$$ 
\end{itemize}
 \end{definition}

 Following \cite{kue}, for  $[a,b]\subseteq \mathbb R$  an interval, 
suppose
$\phi\colon [a,b]\to \Rn$ is a $C^k$ function such that
for all $a\leq t < b$, the $k$-tuple of vectors 
$\,\,
(\phi'(t),\phi''(t),\ldots.,\phi^{(k)}(t))
\,\,$
forms a  linearly independent set
 in $\Rn$. 
Then the  Gram-Schmidt 
process
 yields an orthonormal 
$k$-tuple  $(v_1(t),\ldots,v_k(t))$,  called the {\it Frenet $k$-frame}
 of $\phi$ at $\phi(t)$.
 
 \smallskip 
 
The terminology of Definition~\ref{definition:eigen-frame} is  justified by the following result:

 \begin{theorem}
 \label{theorem:eguali}
Suppose
$\phi\colon [a,b]\to \Rn$ is a $C^{k+1}$ function.
Let  $a\leq t_0<b$ be
 such that the vectors
 $\phi'(t_0),\phi''(t_0),\ldots.,\phi^{(k)}(t_0)$ are linearly independent.
Then for every  sequence 
$\,t_1,t_2,\ldots$   in $[t_0,b]\setminus\{t_0\}$
 converging to $t_0$, the Frenet $k$-frame of $\{\phi(t_i)\}$ exists and is equal to the
Frenet  $k$-frame of $\phi$  at  $\phi(t_0).$
 \end{theorem}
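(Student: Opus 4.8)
The plan is to reduce the statement to a single Taylor expansion of $\phi$ at $t_0$, and then to recognize the normalized projections occurring in Definition~\ref{definition:eigen-frame} as exactly the vectors produced by the Gram--Schmidt process. Throughout write $x=\phi(t_0)$, $x_i=\phi(t_i)$ and $h_i=t_i-t_0$, so that $h_i>0$ and $h_i\to 0$. Since $\phi$ is $C^{k+1}$, Taylor's theorem applied coordinatewise gives
$$
x_i-x=\sum_{j=1}^{k}\frac{h_i^{\,j}}{j!}\,\phi^{(j)}(t_0)+R_i,\qquad \|R_i\|\le C\,h_i^{\,k+1},
$$
with $C$ depending only on a bound for $\phi^{(k+1)}$ on $[t_0,b]$. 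For $0\le j\le k$ set $V_j=\mathbb R\phi'(t_0)+\cdots+\mathbb R\phi^{(j)}(t_0)$, so $V_0=\{0\}$; by the linear independence hypothesis $\dim V_j=j$ and $p_j:=\mathsf{proj}_{V_{j-1}^{\perp}}(\phi^{(j)}(t_0))\neq 0$ for $1\le j\le k$. The Frenet $k$-frame $(v_1,\dots,v_k)$ of $\phi$ at $x$ is the Gram--Schmidt orthonormalization of $(\phi'(t_0),\dots,\phi^{(k)}(t_0))$, and hence satisfies $\mathbb R v_1+\cdots+\mathbb R v_j=V_j$ and $v_j=p_j/\|p_j\|$ for each $j$.

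First I would argue by induction on $j$, $1\le j\le k$, that the Frenet $j$-frame of $\{x_i\}$ exists and equals $(v_1,\dots,v_j)$, the case $j=1$ being the displayed computation below with $V_0=\{0\}$. Assuming the claim for $j-1$, we have $\mathbb R v_1+\cdots+\mathbb R v_{j-1}=V_{j-1}$, so the numerator appearing in the inductive clause of Definition~\ref{definition:eigen-frame} equals $\mathsf{proj}_{V_{j-1}^{\perp}}(x_i-x)$. Applying $\mathsf{proj}_{V_{j-1}^{\perp}}$ to the Taylor expansion kills the terms with $\phi^{(\ell)}(t_0)$ for $\ell\le j-1$ (these lie in $V_{j-1}$), leaves $\frac{h_i^{\,j}}{j!}p_j$ from the $\ell=j$ term, and turns everything else into an error of norm $O(h_i^{\,j+1})$, using that orthogonal projection is $1$-Lipschitz and that $k+1\ge j+1$. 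Thus
$$
\mathsf{proj}_{V_{j-1}^{\perp}}(x_i-x)=\frac{h_i^{\,j}}{j!}\bigl(p_j+h_i e_i\bigr),\qquad \text{with }\{e_i\}\text{ bounded.}
$$

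Since $h_i>0$, the positive scalar $h_i^{\,j}/j!$ cancels upon normalization, so the limit defining the $j$-th frame vector of $\{x_i\}$ is $\lim_i (p_j+h_i e_i)/\|p_j+h_i e_i\|=p_j/\|p_j\|=v_j$, which completes the induction; the case $j=k$ is the assertion of the theorem. I expect the only delicate points to be bookkeeping ones: that the span of the first $j$ Gram--Schmidt vectors is precisely $V_j$ (this is what makes the projection in Definition~\ref{definition:eigen-frame} match the projection onto $V_{j-1}^{\perp}$), and that the surviving leading coefficient $p_j$ is nonzero --- which is exactly where linear independence of $\phi'(t_0),\dots,\phi^{(k)}(t_0)$ enters. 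The one-sided hypothesis $t_i\in[t_0,b]\setminus\{t_0\}$, i.e. $h_i>0$, is genuinely needed: approaching $t_0$ from both sides would make $h_i^{\,j}$ change sign and destroy the limit already for $j=1$. Finally, the hypothesis $C^{k+1}$ (rather than merely $C^{k}$) serves only to make the remainder estimate $\|R_i\|=O(h_i^{k+1})=o(h_i^{k})$ transparent.
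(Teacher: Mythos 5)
Your proposal is correct and follows essentially the same route as the paper's proof: a Taylor expansion with remainder $O(h_i^{k+1})$, an induction identifying the span of the first $j-1$ frame vectors of the sequence with $V_{j-1}=\mathbb R\phi'(t_0)+\cdots+\mathbb R\phi^{(j-1)}(t_0)$, projection onto $V_{j-1}^{\perp}$ isolating the leading term $\tfrac{h_i^{\,j}}{j!}p_j$, and cancellation of the positive scalar under normalization. Your closing remarks (the role of one-sidedness and the fact that $C^{k}$ with the right remainder bound suffices) match observations the paper itself makes.
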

 
 \begin{proof}
 We can write
\begin{equation}\label{Eq:Taylor}
\phi(t)=\phi(t_0)+\phi'(t_0)(t-t_0)+\frac{\phi^{''}(t_0)}{2}(t-t_0)^2
+\cdots+\frac{\phi^{(k)}(t_0)}{k!}(t-t_0)^k+R(t)\,,
\end{equation}
where the remainder $R\colon[a,b]\to \Rn$
satisfies 
\begin{equation}
\label{equation:rest}
 ||R(t)||\leq M(t-t_0)^{k+1}\,\,\,\,\mbox{for some } 0\leq  M\in \mathbb R.
 \end{equation}
Let $(v_1,\ldots,v_k)$ be the Frenet 
$k$-frame of $\phi$ at $\phi(t_0)$. 
Then  $v_1=\phi'(t_0)/||\phi'(t_0)||,$ and for each $1<j\leq k,$ 
$$v_j=\frac{\phi^{(j)}(t_0)-\mathsf{proj}_{\mathbb Rv_1+\cdots
+\mathbb Rv_{j-1}}(\phi^{(j)}(t_0))}{||\phi^{(j)}(t_0)-\mathsf{proj}_{\mathbb Rv_1
+\cdots+\mathbb Rv_{j-1}}(\phi^{(j)}(t_0))||}.
$$

By induction on $1\leq j\leq k$ we will prove
that the Frenet $j$-frame $(u_1,\ldots,u_j)$ of
the sequence
$\{\phi(t_i)\}$ (exists and)  coincides with the
Frenet $j$-frame  $(v_1,\ldots, v_j)$
of $\phi$ at $\phi(t_0)$.

\medskip
 \noindent{\it Basis:} Since $||\phi'(t_0)||\neq0$, 
for  all suitably large  $i$  we have $\phi(t_i)\neq\phi(t_0)$ 
and
\begin{eqnarray*}
u_1&=&\lim_{i\to\infty}\frac{\phi(t_i)-\phi(t_0)}
{||\phi(t_i)-\phi(t_0)||}\\
{}&=&\lim_{i\to\infty}\frac{(\phi(t_i)-\phi(t_0))/(t_i-t_0)}
{||(\phi(t_i)-\phi(t_0))/(t_i-t_0)||}\\
{}&=&\frac{\lim_{i\to\infty}(\phi(t_i)-\phi(t_0))/(t_i-t_0)}
{||\lim_{i\to\infty}(\phi(t_i)-\phi(t_0))/(t_i-t_0)||}\\
{}&=&\frac{\phi'(t_0)}{||\phi'(t_0)||}\\[0.2cm]
&=&v_1.
\end{eqnarray*}

\noindent{\it Induction Step:} 
By induction hypothesis, for  each  $1\leq j<k$ 
the $j$-tuple  $(v_1,\ldots, v_j)$ coincides with
the   Frenet 
 $j$-frame 
 $(u_1,\ldots,u_j)$
 of the sequence $\{\phi(t_i)\}$.
 Let  the  linear subspace
 $S_j$  of $\Rn$ be defined by
 $$
 S_j=
  \mathbb Ru_1+\cdots+\mathbb Ru_j=
 \mathbb Rv_1+\cdots+\mathbb Rv_j=\mathbb R\phi'(t_0)+\cdots+\mathbb R\phi^{(j)}(t_0).
 $$
From \eqref{equation:rest} we have
\begin{equation}
\label{equation:remainder}
\frac{||R(t)-\mathsf{proj}_{S_j}(R(t))||}{(t-t_0)^{j+1}}\leq  M(t-t_0)^{k-j}.
\end{equation}
For each $l=j+1,\ldots,k$ let us define the vector
  $\alpha_l\in \Rn$  by
\begin{equation}
\label{equation:alpha}
\alpha_l=\frac{\phi^{(l)}(t_0)-\mathsf{proj}_{S_j}(\phi^{(l)}(t_0))}{l!},
\end{equation}
whence in particular, 
 $$
 ||\alpha_{j+1}||=\frac{||\phi^{(j+1)}(t_0)-\mathsf{proj}_{S_j}(\phi^{(j+1)}(t_0))||}{(j+1)!}\neq 0.
 $$

\noindent   
By   (\ref{Eq:Taylor}), 
\begin{multline}\label{Eq:middle}
\phi(t_i)-\phi(t_0)-\mathsf{proj}_{S_j}(\phi(t_i)-\phi(t_0))=\\
\alpha_{j+1}(t_i-t_0)^{j+1}+\cdots+\alpha_{k}(t_i-t_0)^k
+R(t_i)-\mathsf{proj}_{S_j}(R(t_i)). 
\end{multline}
From \eqref{equation:remainder}-\eqref{Eq:middle} we get
\begin{align*}
& \,\,\,\,\,\,\,\,\,u_{j+1}=\lim_{i\to\infty}\frac{\phi(t_i)-\phi(t_0)
-\mathsf{proj}_{S_j}(\phi(t_i)-\phi(t_0))}{||\phi(t_i)
-\phi(t_0)-\mathsf{proj}_{S_j}(\phi(t_i)-\phi(t_0))||}\\[0.15cm]
&=\lim_{i\to\infty}\frac{\alpha_{j+1}(t_i-t_0)^{j+1}
+\cdots+\alpha_{k}(t_i-t_0)^k+R(t_i)-\mathsf{proj}_{S_j}(R(t_i))}
{||\alpha_{j+1}(t_i-t_0)^{j+1}+\cdots
+\alpha_{k}(t_i-t_0)^k+R(t_i)-\mathsf{proj}_{S_j}(R(t_i))||}\\[0.15cm]
&=\lim_{i\to\infty}\frac{{\sum_{l=j+1}^{k}\alpha_{l}(t_i-t_0)^{l-(j+1)}
+(R(t_i)-\mathsf{proj}_{S_j}(R(t_i)))}\cdot{(t_i-t_0)^{-(j+1)}}}
{||{\sum_{l=j+1}^{k}\alpha_{l}(t_i-t_0)^{l-(j+1)}
+(R(t_i)-\mathsf{proj}_{S_j}(R(t_i)))}
\cdot{(t_i-t_0)^{-(j+1)}||}}\\[0.15cm]
&=\frac{\alpha_{j+1}}
{||\alpha_{j+1}||}=\frac{\phi^{(j+1)}(t_0)
-\mathsf{proj}_{S_j}(\phi^{(j+1)}(t_0))}{||\phi^{(j+1)}(t_0)
-\mathsf{proj}_{S_j}(\phi^{(j+1)}(t_0)) ||}=v_{j+1}.
\end{align*}

\medskip
\noindent
This concludes the proof.   
\end{proof}

\begin{remark}
The assumption  $\phi\in C^{k+1}$ can be relaxed
to  $\phi\in C^{k}$, so long as  the $k$th
Taylor remainder  $R(t)$ satisfies \eqref{equation:rest}.
\end{remark}


\begin{remark}
Theorem  \ref{theorem:eguali}
yields a method to calculate
the  Frenet $k$-frame of a 
$C^{k+1}$ curve,
not involving higher-order derivatives, 
but taking instead
a  sampling  sequence  
$\{x_i\}$ of points on the curve,
 and then making the elementary  
 linear algebra calculations in the proof above.  
 \end{remark}

 The wide applicability of this method
is shown by the  following example: 

\begin{example}
\label{example:befana}
Let $\phi\colon[0,1]\to \mathbb{R}^2$ be defined by $\phi(x)=(x,x^3)$. 
Then  $\phi'(0)=(1,0)$ and $\phi^{''}(0)=(0,0)$. 
The Frenet $1$-frame of $\phi$ at $(0,0)$ is the vector $(1,0)$, but
$\phi$ has no Frenet $2$-frame at $(0,0)$. 
And yet,  letting $\mathbb R(1,0)$ denote the linear subspace of
$\mathbb R^2$ given by the 
$x$-axis,
every sequence $\{t_i\}\in [0,1]\setminus \{0\}$ converging to $0$ satisfies 
\[
\lim_{i\to\infty}\frac{\phi(t_i)-\phi(0)-\mathsf{proj}_{\mathbb R(1,0)}(\phi(t_i)-\phi(0))}
{||\phi(t_i)-\phi(0)-\mathsf{proj}_{\mathbb R(1,0)}(\phi(t_i)-\phi(0))||}
=\lim_{i\to\infty}\frac{(0,t_i^3)}{||(0,t_i^3)||}=(0,1).
\]
We have shown:
{\it There exist a curve $\gamma$ having no Frenet  $k$-frame at a point
$x$, but the Frenet $k$-frame of 
every sequence of points of $\gamma$
converging to $x$  exists and is
independent of the parametrization of $\gamma.$}  
\end{example}


\begin{example}
\label{example:genoveffa}
While under the   hypotheses of Theorem
 \ref{theorem:eguali} the Frenet $k$-frames
 of any two sampling sequences of a curve $\gamma$
 at a point $x\in \gamma$ are equal, the map
 $\psi(x)=(x,x^2\sin(1/x))\colon [0,1]\to \mathbb R^2$  (with the proviso that
 $\psi(0)=(0,0)$), yields
 an   example
 of a curve   $\gamma$ that is not
  $C^{2}$ and has two sequences
$\{x_i\}$ and $\{y_i\}$ of points of $\gamma$ both converging to the
same point $(0,0)$ of $\gamma$,  but having different Frenet $2$-frames.
\end{example}


 \section{Simplexes and Frenet frames} 
 \label{section:pwl}

Fix $n=1,2,\ldots $.
For any  subset $E$ of the euclidean space 
$\mathbb R^n,$ the {\it convex hull}
$\,\,\,{\rm conv}(E)$  is the set of all  {\it convex
combinations} of elements of $E$.  
We say that $E$ is  {\it convex} if $E=\conv(E).$
For any  subset $Y$ of $\mathbb R^n$,  
  the   {\it affine hull}  $\aff(Y)$ 
of  $Y$ is the set of all {\it affine combinations} in  $\mathbb R^n$
of elements of $Y$. 
A set  $\{y_1,\ldots,y_m\}$ of  points in
$\mathbb R^n$ is said to be
{\it affinely independent}  if  none of its  elements 
 is  an affine combination of the remaining elements.
The {\it relative interior}\,\, $\relint(C)$
of a convex set $C\subseteq \mathbb R^n$
is the interior of $C$ in the affine hull  
 of $C$.
 For $\,\,0\leq d \leq n$, a {\it d-simplex}  $T$
in $\mathbb R^{n}$ is the
convex hull $\, \conv(v_{0},\ldots,v_{d})\,$ of $d+1$ affinely
independent points in  
$\mathbb R^n$.  
The {\it  vertices}
 $v_{0},\ldots,v_{d}$ are uniquely determined by $T$.
 A {\it face} of $T$ is the convex hull of a subset $V$ of vertices
 of $T$.  If the cardinality of $V$ is  $d$, then $V$   is
said to be a {\it facet} of $T$.
 
 The {\it positive cone of  $Y\subseteq \Rn$ at a point $x\in Y$} is the set
\begin{equation}
\label{equation:cone}
{\rm Cone}(Y,x)=\{y\in \Rn\mid x+\rho (y-x)\in Y
\mbox{ for some } \rho>0 \}.
\end{equation}
When  $T$ is a simplex,  ${\rm Cone}(T,x)$ is closed. 
If  $F$ is a face of $T$ and
 $x\in\relint(F)$ then for each $y\in F$ we have 
 \begin{equation}
 \label{Eq:Cone}
{\rm Cone}(T,x)=\aff(F)+{\rm Cone}(T,y).
\end{equation}
In particular, if $x\in\relint(T)$  then ${\rm Cone}(T,x)=\aff(T)$.

\medskip

 \begin{lemma}
 \label{Lem:DesInc}
Suppose $T\subseteq \Rn$ is a simplex
 and $F$ is a face of $T$.
 \begin{itemize}
 \item[(a)] 
If  $S$ is an arbitrary simplex contained in $T,$
and  $F\cap \relint(S)\not=\emptyset,$
then  $S$ is contained in $F$.

 \item[(b)]
A point $z$ lies in
$\relint(F)$ iff
$F$ is the smallest
 face of $T$ containing $z$.

\end{itemize}
\end{lemma}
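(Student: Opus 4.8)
The plan is to prove both parts by analyzing which face of $T$ a point belongs to, using barycentric coordinates with respect to the vertices of $T$. Write $T=\conv(v_0,\dots,v_d)$, so every point $z\in T$ has a unique representation $z=\sum_{i=0}^d \lambda_i v_i$ with $\lambda_i\geq 0$ and $\sum_i\lambda_i=1$. Define the \emph{support} of $z$ to be $\mathrm{supp}(z)=\{i : \lambda_i>0\}$. The key elementary fact, which I would establish first, is that for a face $F=\conv(v_i : i\in J)$ one has $z\in\relint(F)$ if and only if $\mathrm{supp}(z)=J$; this follows because the affine hull of $F$ consists exactly of the affine combinations of the $v_i$ with $i\in J$, and the relative interior is cut out inside that affine hull by the strict inequalities $\lambda_i>0$ for $i\in J$ (the remaining coordinates being forced to $0$).

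For part (b), I would argue as follows. If $z\in\relint(F)$ then $\mathrm{supp}(z)=J$ where $F=\conv(v_i:i\in J)$. Any face $F'=\conv(v_i:i\in J')$ containing $z$ must have $\mathrm{supp}(z)\subseteq J'$ by uniqueness of barycentric coordinates (coordinates outside $J'$ vanish on $F'$), hence $J\subseteq J'$, hence $F\subseteq F'$; so $F$ is the smallest face containing $z$. Conversely, the smallest face of $T$ containing $z$ is exactly $\conv(v_i : i\in\mathrm{supp}(z))$ — it clearly contains $z$, and any face containing $z$ must include all vertices in $\mathrm{supp}(z)$ — so if $F$ is the smallest such face then $F=\conv(v_i:i\in\mathrm{supp}(z))$ and therefore $z\in\relint(F)$ by the established equivalence.

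For part (a), let $S=\conv(w_0,\dots,w_r)\subseteq T$ be a simplex and suppose there is a point $z\in F\cap\relint(S)$. Since $z\in F=\conv(v_i:i\in J)$, uniqueness of barycentric coordinates in $T$ gives $\mathrm{supp}(z)\subseteq J$. On the other hand, $z\in\relint(S)$ means $z$ is a convex combination of $w_0,\dots,w_r$ with all coefficients strictly positive; expressing each $w_j$ in barycentric coordinates of $T$ and adding, the support of $z$ is the union of the supports of the $w_j$ (no cancellation, since all coefficients are nonnegative and the $w_j$-coefficients are strictly positive). Hence $\mathrm{supp}(w_j)\subseteq\mathrm{supp}(z)\subseteq J$ for every $j$, which says each $w_j$ lies in $\aff(v_i:i\in J)$; combined with $w_j\in T$ and the fact that $F=T\cap\aff(v_i:i\in J)$ (a face of a simplex is the intersection of the simplex with the affine span of the corresponding vertex subset), we get $w_j\in F$ for all $j$, and therefore $S=\conv(w_0,\dots,w_r)\subseteq F$ by convexity of $F$.

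The only mildly delicate point — and the one I would be most careful about — is the claim that the support of a positive convex combination is the union of the supports of the summands, and relatedly that $F = T\cap\aff(v_i:i\in J)$; both are instances of the fact that nonnegative barycentric coordinates cannot cancel, so membership in a proper face is detected exactly by the vanishing of the complementary coordinates. Everything else is bookkeeping with barycentric coordinates, so I would state the support lemma once at the start and invoke it in both parts.
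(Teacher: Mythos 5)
Your proof is correct, and it takes a genuinely different route from the paper's. You work throughout with barycentric coordinates relative to the vertices of $T$ and reduce everything to a single support lemma ($z\in\relint(\conv(v_i:i\in J))$ iff $\mathrm{supp}(z)=J$, plus the no-cancellation fact that the support of a strictly positive convex combination is the union of the supports); part (a) then follows directly and part (b) is proved independently. The paper instead describes $F$ as an intersection of the affine hulls $H_1,\dots,H_t$ of the facets containing it, picks a hyperplane $H$ with $y\in\interior(H^+)$ for a hypothetical $y\in S\setminus F$, and derives a contradiction by pushing a short segment through the relative-interior point $x$ out of $T$ across $H$; it then deduces (b) as an easy consequence of (a). Your approach is more self-contained and arguably cleaner for simplices, since affine independence of the vertices makes barycentric coordinates unique and the support lemma immediate; the paper's half-space argument is the one that would survive if $T$ were replaced by a general convex polytope, where barycentric coordinates are no longer available. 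The two delicate points you flag (no cancellation of nonnegative coordinates, and $F=T\cap\aff(v_i:i\in J)$) are exactly the right ones and both hold, so there is no gap.
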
 
 
\begin{proof} 
(a) Let  $F_1,\ldots,F_u$ be the  facets 
 of $T$, with their respective affine hulls
$H_1,\ldots,H_u.$  Each  $H_j$  is the boundary of the closed
half-space  $H_j^+\subseteq T$ and of the other
closed half-space $H_j^-.$  Without loss of generality,
$F_1,\ldots,F_t$ are the facets of $T$ containing $F$. Then
$
\aff(F) =  H_1\cap\cdots \cap H_t$ and
$F= (H_{t+1}^+\cap\cdots\cap H_u^+)\cap \aff(F).
$
By way of contradiction, suppose  $x\in F\cap \relint(S)$
and  $y\in S\setminus F.$  For some $\epsilon>0$ the segment
$\conv(x+\epsilon(y-x), x-\epsilon(y-x))$ is contained in $S$.
For some hyperplane $H\in \{H_1,\ldots,H_t\}$ 
the point $y$ lies  in the open half-space
$\,\,\,\interior(H^+)= \Rn\setminus H^-,$  where
``int'' denotes  topological interior.
Now $x+\epsilon(y-x)\in \interior(H^+)$ and
$x-\epsilon(y-x)\in \interior(H^-),$ whence
$x-\epsilon(y-x)\notin T,$ which contradicts 
$S\subseteq T$.

(b)  This  easily follows from (a).
\end{proof}

 \begin{proposition}
\label{proposition:filterbase}
Let  $\,x\in \Rn$ and $\,u_1,\ldots,u_m$ be linearly independent vectors
in $\Rn$.  Let $\lambda_1,\,\mu_1,\ldots,\lambda_m,\,\mu_m>0.$ 
Then the intersection of the two
$m$-simplexes
$\conv(x,x+\lambda_{1}u_{1},
           \,\ldots, \,\, x+\lambda_{1}u_{1}+
           \cdots +\lambda_{m}u_{m})\,\,$ and  $\,\,\conv(x, x+ \mu_{1}u_{1},
           \,\ldots, \,\,x+ \mu_{1}u_{1}+
           \cdots +\mu_{m}u_{m})$ is 
an $m$-simplex 
of the form $\conv(x,
          x+ \nu_{1}u_{1},
           \,\ldots, \,\, 
           x+\nu_{1}u_{1}+
           \cdots +\nu_{m}u_{m})$
for uniquely determined  real numbers $\nu_1,\ldots,\nu_m>0$.
\end{proposition}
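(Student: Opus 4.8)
The plan is to reduce, via an affine change of coordinates, to an explicit computation in $\mathbb{R}^m$, and then to a short argument about redundant linear inequalities. Since $u_1,\dots,u_m$ are linearly independent, both $m$-simplexes lie in the affine subspace $x+\mathbb{R}u_1+\cdots+\mathbb{R}u_m$, and the affine isomorphism $y\mapsto L(y-x)$ of this subspace onto $\mathbb{R}^m$, where $L$ is the linear map sending $u_i$ to the $i$th standard basis vector $e_i$, carries $\conv(x,x+\lambda_1u_1,\dots,x+\lambda_1u_1+\cdots+\lambda_mu_m)$ onto $\conv(0,\lambda_1e_1,\dots,\lambda_1e_1+\cdots+\lambda_me_m)$ and similarly for the $\mu$-simplex, while respecting intersections and sending a ``staircase'' simplex $\conv(x,x+\nu_1u_1,\dots)$ to $\conv(0,\nu_1e_1,\dots)$ with the same $\nu_i$. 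So I may assume $x=0$ and $u_i=e_i$. I would then record the elementary parametrization: expanding a point of $\conv(0,\lambda_1e_1,\dots,\lambda_1e_1+\cdots+\lambda_me_m)$ as $\sum_{k=0}^m t_kv_k$ with $v_0=0$, $v_k=\lambda_1e_1+\cdots+\lambda_ke_k$, $t_k\ge 0$, $\sum_k t_k=1$, and putting $c_j=\sum_{k\ge j}t_k$, one finds that its $j$th coordinate equals $\lambda_jc_j$ and that $1\ge c_1\ge\cdots\ge c_m\ge 0$; conversely each such chain comes from exactly one point. Hence that simplex equals $\{y\in\mathbb{R}^m:1\ge y_1/\lambda_1\ge\cdots\ge y_m/\lambda_m\ge 0\}$, and the same holds with $\mu$ or with $\nu$ in place of $\lambda$.

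Write $A$ and $B$ for the $\lambda$- and $\mu$-simplexes, and put $\nu_1=\min(\lambda_1,\mu_1)$ and $\nu_j=\nu_{j-1}\cdot\min(\lambda_j/\lambda_{j-1},\mu_j/\mu_{j-1})$ for $2\le j\le m$, all strictly positive. The heart of the proof is the claim that $A\cap B=\{y:1\ge y_1/\nu_1\ge\cdots\ge y_m/\nu_m\ge 0\}$, which I would establish by double inclusion from two observations: (i) if $y$ lies in $A$ or in $B$ then every coordinate $y_j$ is $\ge 0$, since each chain descends to $0$; and (ii) when $y_j\ge 0$ and $r\le r'$, the bound $y_{j+1}\le r\,y_j$ implies $y_{j+1}\le r'\,y_j$. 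For a point with non-negative coordinates, ``$y\in A$ and $y\in B$'' therefore amounts to $y_1\le\nu_1$, $y_{j+1}\le(\nu_{j+1}/\nu_j)y_j$ for $j<m$, and $y_m\ge 0$ — which, dividing through by the relevant $\nu$'s, is exactly the chain of inequalities defining the $\nu$-simplex — while conversely that chain forces all coordinates $\ge 0$ and then, since $\nu_1\le\lambda_1$ and $\nu_{j+1}/\nu_j\le\lambda_{j+1}/\lambda_j$ together with the symmetric inequalities for $\mu$, it forces membership in both $A$ and $B$. By the parametrization of the first step, $\{y:1\ge y_1/\nu_1\ge\cdots\ge y_m/\nu_m\ge 0\}$ is the $m$-simplex $\conv(0,\nu_1e_1,\dots,\nu_1e_1+\cdots+\nu_me_m)$; transporting back by $L^{-1}$ and translation by $x$ gives $A\cap B=\conv(x,x+\nu_1u_1,\dots,x+\nu_1u_1+\cdots+\nu_mu_m)$, as required.

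For uniqueness of $\nu_1,\dots,\nu_m$ I would invoke the fact that a simplex determines its vertex set: in the $u$-coordinates the non-origin vertices are the vectors with coordinate tuples $(\nu_1,\dots,\nu_k,0,\dots,0)$, and since the $\nu_i$ are positive their supports $\{1\},\{1,2\},\dots,\{1,\dots,m\}$ are pairwise distinct, so $\nu_i$ is recovered as the $i$th coordinate of the unique vertex whose support is $\{1,\dots,i\}$. The one place where care is needed — and where a naive facet count fails — is step (ii): the intersection $A\cap B$ is a priori described by the $m+1$ facet inequalities of $A$ together with the $m+1$ facet inequalities of $B$, far too many for an $m$-simplex, and the real content of the argument is that non-negativity of the coordinates makes the two inequalities of each opposing pair collapse to one, bringing the total back to $m+1$. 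I expect this to be the only genuine obstacle; the remaining steps are routine.
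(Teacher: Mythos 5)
Your proof is correct, but it follows a genuinely different route from the paper's. The paper argues by induction on $t=1,\ldots,m$: assuming $U_{t-1}\cap V_{t-1}=\conv(x,x+\nu_1u_1,\ldots,x+\nu_1u_1+\cdots+\nu_{t-1}u_{t-1})$, it considers the half-line from the top vertex $z=x+\nu_1u_1+\cdots+\nu_{t-1}u_{t-1}$ in the direction $u_t$, takes $\eta_1$ (resp.\ $\eta_2$) to be the largest $\eta$ with $z+\eta u_t\in U_t$ (resp.\ $V_t$), and sets $\nu_t=\min(\eta_1,\eta_2)$; the geometric input is the cone identity \eqref{Eq:Cone}, the same machinery used for Theorem~\ref{theorem:contains-u-simplex}. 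You instead normalize to $x=0$, $u_i=e_i$ and replace the $V$-description of the simplexes by the $H$-description $\{y: 1\ge y_1/\lambda_1\ge\cdots\ge y_m/\lambda_m\ge 0\}$, after which the intersection is a purely one-line matter of collapsing each pair of opposing inequalities on the nonnegative orthant; your recursion $\nu_1=\min(\lambda_1,\mu_1)$, $\nu_{j+1}=\nu_j\min(\lambda_{j+1}/\lambda_j,\mu_{j+1}/\mu_j)$ agrees with the paper's $\min(\eta_1,\eta_2)$ when unwound. What your version buys: closed-form expressions for the $\nu_j$, and an explicit verification that \emph{every} point of the intersection lies in the $\nu$-simplex --- the inclusion that the paper's final ``we conclude'' leaves compressed, since maximality of $\nu_t$ along the single half-line through $z$ does not by itself describe the whole of $U_t\cap V_t$. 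What the paper's version buys: it is coordinate-free and reuses lemmas already established for faces and cones of simplexes. Your uniqueness argument (vertices of a simplex are determined, and the staircase vertices are distinguished by their supports) is also sound and is essentially what the paper's ``uniquely determined'' silently relies on.
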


\begin{proof}
We argue by induction on $t=1,\ldots,m$. 
The cases $t=1,2$ are trivial.
Proceeding inductively,
for any simplex
 $W =\conv(x,x+\theta_{1}u_{1},
           \,\ldots, \,\, x+\theta_{1}u_{1}+
           \cdots +\theta_{t}u_{t})$, 
           let  $W' =\conv(x, x+ \theta_{1}u_{1},
           \,\ldots, \,\,x+ \theta_{1}u_{1}+
           \cdots +\theta_{t-1}u_{t-1})$
and
 $W'' =\conv(x, x+\theta_{1}u_{1},
           \,\ldots, \,\,x+ \theta_{1}u_{1}+
           \cdots +\theta_{t-2}u_{t-2})$.
By \eqref{Eq:Cone}, for each $y\in W'\setminus W''$ the half-line
 from $y$ in direction $u_t$ intersects  $W$
 in a segment  $\conv(y,y+\gamma u_t)$ for some
 $\gamma >0$ depending on $y$.
Now let          
                   $$
U_t= {\rm conv}(x,
          x+ \lambda_{1}u_{1},
           \,\ldots, \,\, x+\lambda_{1}u_{1}+
           \cdots +\lambda_{t}u_{t}),
           $$
           $$
           V_t = {\rm conv}(x,
          x+ \mu_{1}u_{1},
           \,\ldots, \,\,x+ \mu_{1}u_{1}+
           \cdots +\mu_{t}u_{t}).
        $$
We then have 
          $$
U_{t-1}= U_t'= {\rm conv}(x,
          x+ \lambda_{1}u_{1},
           \,\ldots, \,\,
           x+ \lambda_{1}u_{1}+
           \cdots +\lambda_{t-1}u_{t-1}),
           $$
                  $$
V_{t-1}=V_t'=  {\rm conv}(x,
          x+ \mu_{1}u_{1},
           \,\ldots, \,\, 
           x+\mu_{1}u_{1}+
           \cdots +\mu_{t-1}u_{t-1}) ,
        $$
        and
           $$
U_{t-2}= U_t''= {\rm conv}(x,
          x+ \lambda_{1}u_{1},
           \,\ldots, \,\,
           x+ \lambda_{1}u_{1}+
           \cdots +\lambda_{t-2}u_{t-2}),
           $$
                  $$
V_{t-2}=V_t''=  {\rm conv}(x,
          x+ \mu_{1}u_{1},
           \,\ldots, \,\,
           x+ \mu_{1}u_{1}+
           \cdots +\mu_{t-2}u_{t-2}) .
        $$       
        
 \smallskip
 \noindent       
             By induction hypothesis,  
for uniquely determined $\nu_1,\ldots,\nu_{t-1}>0$
we can write
        $$
U_{t}'\cap V_{t}' = {\rm conv}(x,
          x+ \nu_{1}u_{1},
           \,\ldots, \,\, 
           x+\nu_{1}u_{1}+
           \cdots +\nu_{t-1}u_{t-1}).
        $$
        The point $z=x+\nu_{1}u_{1}+
           \cdots +\nu_{t-1}u_{t-1}$
           lies in $U_{t}'\setminus U_{t}''.$
 Let  $\eta_1$ be the largest $\eta$  such that
$z+\eta u_t$  lies in  $U_t$. 
Since $z\in V_{t}'\setminus V_{t}'',\,\,$ 
let similarly
$\eta_2$ be the largest $\eta$  such that
$z+\eta u_t$  lies in  $V_t$. 
As already noted at the beginning of this proof, 
the real number
$\nu_t=\min(\eta_1,\eta_2)$ is $>0$. 
Evidently,  
$\nu_t$ is the largest $\eta$
such that
$z+\eta u_t$ lies in $U_t\cap V_t.$
We conclude that 
$U_t\cap V_t = \conv(x,
           x+\nu_{1}u_{1},
           \,\ldots, \,\, 
           x+\nu_{1}u_{1}+
           \cdots + \nu_{t}u_{t})$.
\end{proof}

The following key result will find repeated use in the rest of this paper:

  \begin{theorem}
  \label{theorem:contains-u-simplex}
   Let
  $(u_1, \dots , u_k )$ be the  
  Frenet $k$-frame of  a sequence
   $\{x_i\}$  in $\Rn$ 
  converging to $x$. 
  Suppose 
 a simplex  $\,\,T \subseteq \Rn$ 
contains  $\{x_i\}$. Then $\,T$
contains  the simplex
$\,\,\conv(x,x+\lambda_1u_1,\ldots, x+\lambda_1u_1
+\cdots+\lambda_ku_k),$
 for some 
$\lambda_1,\dots, \lambda_k>0.$ 
\end{theorem}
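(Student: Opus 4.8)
The plan is to pass from $T$ to its positive cone $C:={\rm Cone}(T,x)$, which for a simplex is closed and is a finite intersection of closed half-spaces through $x$,
\[
C=\bigcap_{l=1}^{N}\{\,y\in\Rn\;:\;\langle a_l,\,y-x\rangle\ge 0\,\}
\]
(that ${\rm Cone}(T,x)$ has this form is clear from the facet description of a simplex used in the proof of Lemma~\ref{Lem:DesInc}: take the inner normals of the facet hyperplanes of $T$ through $x$, together with two opposite normals for each linear equation cutting out $\aff(T)$). From here I would first produce numbers $\lambda_j>0$ realizing the weaker statement that the partial-sum points $x+\lambda_1u_1+\cdots+\lambda_mu_m$ all lie in $C$, and then scale everything down into $T$. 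Two facts are immediate: $x\in T$ because $T$ is compact and $x_i\to x$; and $x_i\in T\subseteq C$ forces $\langle a_l,\,x_i-x\rangle\ge 0$ for every $i$ and $l$.

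The technical core is a scaling analysis of the samples. Write $z_i:=x_i-x$, $S_j:=\mathbb R u_1+\cdots+\mathbb R u_j$, $z_i^{(j)}:=z_i-\mathsf{proj}_{S_j}(z_i)$ with $z_i^{(0)}:=z_i$, and $c_{ij}:=\langle z_i,u_j\rangle$. Unwinding Definition~\ref{definition:eigen-frame}, for each $j=1,\dots,k$ we have $z_i^{(j-1)}/||z_i^{(j-1)}||\to u_j$; since $z_i^{(j)}=z_i^{(j-1)}-c_{ij}u_j$ with $z_i^{(j)}\perp u_j$, this yields $c_{ij}/||z_i^{(j-1)}||\to 1$ and hence, by Pythagoras, $||z_i^{(j)}||\,/\,||z_i^{(j-1)}||\to 0$. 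Iterating, for all large $i$ one obtains $c_{i1},\dots,c_{ik}>0$ together with the strict hierarchy $c_{i,j+1}=o(c_{ij})$ for $1\le j<k$ and $||z_i^{(k)}||=o(c_{ik})$. (Only the conditions defining the Frenet $k$-frame are used here; nothing about a Frenet $(k+1)$-frame is invoked.) Consequently, for each $l$ either $\langle a_l,u_j\rangle=0$ for all $j\le k$ --- call such an $l$ \emph{flat} --- or, letting $j_l$ be the least $j$ with $\langle a_l,u_j\rangle\ne 0$, the dominant term of
\[
0\;\le\;\langle a_l,\,z_i\rangle\;=\;\sum_{j=1}^{k}c_{ij}\langle a_l,u_j\rangle+\langle a_l,\,z_i^{(k)}\rangle
\]
is $c_{i,j_l}\langle a_l,u_{j_l}\rangle$ with $c_{i,j_l}>0$, forcing $\langle a_l,u_{j_l}\rangle>0$.

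Now I choose the coefficients. If every $l$ is flat (in particular if $N=0$), any $\lambda_j>0$ will serve. Otherwise put $C_0:=\max_{l,j}|\langle a_l,u_j\rangle|>0$ and $c_0:=\min\{\langle a_l,u_{j_l}\rangle : l\text{ not flat}\}>0$, set $\rho:=\min\{1/2,\;c_0/(2C_0)\}$ and $\lambda_j:=\rho^{\,j-1}$. A short check gives $\langle a_l,\sum_{j=1}^{m}\lambda_ju_j\rangle\ge 0$ for every $l$ and every $m\in\{0,\dots,k\}$: the sum is $0$ when $l$ is flat or $m<j_l$, and otherwise it is at least $\lambda_{j_l}c_0$ minus a tail bounded by $C_0\sum_{j=j_l+1}^{k}\rho^{\,j-1}\le 2C_0\rho\,\lambda_{j_l}$, hence $\ge 0$. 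Therefore each point $p_m:=x+\sum_{j=1}^{m}\lambda_ju_j$ lies in $C$, so there is $\rho_m>0$ with $x+\rho_m(p_m-x)\in T$ (and $\rho_0:=1$ works, since $p_0=x\in T$). Put $\bar\rho:=\min\{\rho_0,\dots,\rho_k\}>0$. For each $m$ the point $x+\bar\rho\,(p_m-x)$ lies on the segment from $x\in T$ to $x+\rho_m(p_m-x)\in T$, hence in $T$; that is, $x+\sum_{j=1}^{m}(\bar\rho\lambda_j)u_j\in T$ for $m=0,1,\dots,k$. Since $T$ is convex it contains the convex hull of these $k+1$ points, which is exactly the simplex $\conv(x,\,x+\lambda_1'u_1,\,\dots,\,x+\lambda_1'u_1+\cdots+\lambda_k'u_k)$ with $\lambda_j':=\bar\rho\lambda_j>0$; this is the assertion.

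The main obstacle is the middle paragraph: one has to set up the iterated Gram--Schmidt recursion so that the magnitudes $c_{i1}\gg c_{i2}\gg\cdots\gg c_{ik}\gg||z_i^{(k)}||$ drop out cleanly from the Frenet limits, while resisting the temptation to use higher-order (curvature-type) information that the hypothesis does not supply. Everything after that is routine convex geometry; in particular, beyond the closedness and polyhedrality of ${\rm Cone}(T,x)$, the relation \eqref{Eq:Cone} is not needed.
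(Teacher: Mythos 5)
Your proof is correct, but it takes a genuinely different route from the paper's. The paper argues by induction on $l$: having placed $C_l=\conv(x,x+\lambda_1u_1,\ldots,z_l)$ inside the smallest face $F_l$ of $T$ containing $z_l$ (via Lemma~\ref{Lem:DesInc}), it uses the identity \eqref{Eq:Cone} to rewrite ${\rm Cone}(T,z_l)$ as $\aff(F_l)+{\rm Cone}(T,x)$, so that the unit vectors whose limit defines $u_{l+1}$ all lie in the closed cone ${\rm Cone}(T,z_l)$ and a small step $z_l+\epsilon u_{l+1}$ stays in $T$. You never move the base point: you describe ${\rm Cone}(T,x)$ by finitely many half-spaces, extract from the Frenet limits the scale separation $c_{i1}\gg\cdots\gg c_{ik}\gg||z_i^{(k)}||$ (your Gram--Schmidt/Pythagoras computation is right, and it correctly uses only the $k$-frame hypothesis), deduce $\langle a_l,u_{j_l}\rangle>0$ at the first non-annihilated index by dividing the constraint $\langle a_l,z_i\rangle\ge 0$ by the dominant coefficient $c_{i,j_l}>0$, and then choose geometrically decaying $\lambda_j=\rho^{j-1}$ so that every partial-sum point lies in the cone; a single rescaling by $\bar\rho$ then pushes the whole configuration into $T$, and convexity of $T$ finishes the argument. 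The paper's version is coordinate-free and reuses the face/relative-interior machinery it needs anyway; yours is quantitative, yields explicit (geometrically decaying) coefficients, and isolates exactly which asymptotic information the Frenet-frame hypothesis supplies, at the price of invoking --- correctly, but without proof --- the standard polyhedral half-space description of the feasible-direction cone of a simplex at a point of it, which is a slightly stronger fact than the bare closedness of ${\rm Cone}(T,x)$ that the paper records.
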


\begin{proof}
We will  prove 
the following stronger statement:
\medskip

\noindent{\it Claim.} 
For each $l\in \{1,\ldots,k\}$ 
there exist    $\lambda_1,\ldots,\lambda_l>0$ such that: 
\begin{itemize}
\item[(i)]$\conv(x,x+\lambda_1u_1,\ldots, x+\lambda_1u_1+\cdots+\lambda_lu_l)\subseteq T$ and 

\smallskip
\item[(ii)]  letting  $F_l$ be the smallest  face of $T$ 
containing  the point
 $z_l=x+\lambda_1u_1+\cdots+\lambda_lu_l$\,\,
  (which  by {Lemma \ref{Lem:DesInc}(b)} is equivalent to  
$z_l\in  \relint(F_l)$),
we have the inclusion
 $\conv(x,x+\lambda_1u_1,\ldots, x+\lambda_1u_1+\cdots+\lambda_lu_l)\subseteq F_l$.
\end{itemize}

The proof is  by induction on $l=1,\ldots,k$.

\medskip

\noindent
{\it Basis Step $(l=1)$:}   
Since each $x_i$ is in $T$
then  $x+(x_i-x)/||x_i-x||\in{\rm Cone}(T,x)$. 
Since ${\rm Cone}(T,x)$ is closed, then   $x+u_1\in{\rm Cone}(T,x)$. From 
 \eqref{equation:cone} we obtain an $\epsilon>0$ 
 such that $x+\epsilon u_1\in T$.   
Let $\lambda_1=\epsilon/2$. Then
$\conv(x,x+\lambda u_1)\subseteq \conv(x,x+\epsilon u_1)\subseteq T$, and (i) follows. 
Let $F_1$ be the smallest  face of $T$ containing the point
$z_1=x+\lambda_1u_1$.  Evidently,  $z_1\in 
\,\relint(\conv(x,x+\epsilon u_1)).$
{ By  Lemma~\ref{Lem:DesInc}(b),}   $z_i\in \relint(F_1).$  
{ By Lemma~\ref{Lem:DesInc}(a),}
$F_1\supseteq \conv(x,x+\epsilon u_1)\supseteq
\conv(x,x+\lambda u_1).$ 
This proves (ii) and concludes the proof of the basis step.

\medskip

\noindent
{\it Induction Step:}  For  $1\leq l<k$,  induction  
yields  $\lambda_1,\ldots,\lambda_l>0$ such that, letting 
$C_l=\conv(x,x+\lambda_1u_1,\ldots, x+\lambda_1u_1+\cdots+\lambda_lu_l) \,\,\mbox{ and }z_l=x+\lambda_1u_1+\cdots+\lambda_lu_l,$
we have  $C_l\subseteq T$.   Further,
letting  $F_l$ be the smallest face of $T$ containing $z_l$,
we have   $C_l\subseteq F_l,$  whence 
 $\aff(C_l)
 =x+\mathbb{R} u_1+\cdots+\mathbb{R} u_{l}\subseteq\aff(F_l)$.  Since $z_l\in\relint(F_l)$ and $x_i-x\in
  {\rm Cone} (T,x)$,
 { from (\ref{Eq:Cone})} we obtain
 $$z_l+\frac{x_i-x-\mathsf{proj}_{\mathbb{R} u_1+\cdots+\mathbb{R} u_{l}}(x_i-x)}
{||x_i-x-\mathsf{proj}_{\mathbb{R} u_1+\cdots+\mathbb{R} u_{l}}(x_i-x)||}\in {\rm Cone}(T,z_l).$$ 
 ${\rm Cone}(T,z_l)$ is closed, because    
$z_l+u_{l+1}\in{\rm Cone}(T,z_l)$. 
By  \eqref{equation:cone}, 
 there exists $\epsilon>0$ such that 
 $z_l+\epsilon u_{l+1}\in T$, whence  
 $\conv(z_l, z_l+\epsilon u_{l+1})\subseteq T$.
Setting now
  $\lambda_{l+1}=\epsilon/2$ and $z_{l+1}=z_l+\lambda_{l+1}u_{l+1}$, 
  condition  (i) in the claim above  follows from the identity 
$$
\conv(x,x+\lambda_1u_1,\ldots, x+\lambda_1u_1+\cdots+\lambda_{l+1}u_{l+1})=\conv(C_{l}\cup \{z_{l+1}\})\subseteq T.$$

 \noindent
Let  $F_{l+1}$ be the smallest  face of $T$ 
containing the point  $z_{l+1}\in \relint(\conv(z_l,z_l+\epsilon u_{l+1})).\,\,\,$
{By  Lemma~\ref{Lem:DesInc}(b),} 
$z_{l+1}\in \relint(F_{l+1}).\,\,\,$
By {Lemma~\ref{Lem:DesInc}(a)},\,\,\, 
$$ F_{l+1}\supseteq \conv(z_l, z_l+\epsilon u_{l+1})
\supseteq \conv(z_l, z_l+\lambda_{l+1} u_{l+1}).$$
The minimality property of $F_l$  yields
 $F_l\subseteq  F_{l+1}$.  By induction hypothesis, 
$C_{l}\subseteq F_{l+1}$. 
In conclusion,
 $
 \conv(x,x+\lambda_1u_1,\ldots, x+\lambda_1u_1+\cdots+\lambda_{l+1}u_{l+1})=\conv(C_{l}\cup \{z_{l+1}\})\subseteq F_{l+1},
 $ 
 as required to prove (ii) and
to complete the proof.
\end{proof}

\section{Tangents of $X$,  principal ideals of 
$\mathcal R(X)$: the case  $X\subseteq \mathbb R^2$}
\label{Sec:Main}

For $k=1$ the following definition  
boils down to Definition
 \ref{definition:severi} of Severi-Bouligand tangent
 vector. 
 As in Definition
 \ref{definition:severi},
$X$ is {\it an arbitrary}  nonempty subset of $\Rn$.

\begin{definition}
\label{definition:tangent}
Let $X\subseteq \Rn$, 
 $x\in \Rn$ and $u=(u_1,\ldots,u_k)$ be 
a $k$-tuple
of pairwise orthogonal unit vectors 
in $\Rn$.
Then  $u$ is said to be a
  {\it   tangent  of $X$ at $x$}
if $X$ contains a 
sequence
  $\{x_i\}$ 
converging to $x$, 
 whose
  Frenet $k$-frame   is  $u$.  We say that $\{x_i\}$ 
  {\it determines} $u$.
We say that 
$u$ is {\it outgoing} if,
in addition,  there are  $\lambda_1,\ldots,\lambda_k>0$ such that 
the simplex
$C=\conv(x,x+\lambda_1u_1,\ldots, x+\lambda_1u_1+\cdots+\lambda_{k}u_{k})$ and its facet 
 $C'=\conv(x,x+\lambda_1u_1,\ldots, x+\lambda_1u_1+\cdots+\lambda_{k-1}u_{k-1})$ 
have the same intersection with $X$.
\end{definition}

The following elementary  material
on piecewise linear topology \cite{sta} is necessary to introduce
the Riesz space  $\mathcal R(X)$ of piecewise linear
functions on $X$.  In Theorem 	\ref{theorem:main} below,
the Frenet tangent frames of $X$ will be related
to the maximal and principal ideals of $\mathcal R(X)$.

A {\it polyhedron} $P$ in $\Rn$  is a finite union of simplexes
in $\Rn.$  $P$ need not be convex or connected.
Given a  polyhedron $P$,   a {\it triangulation}  of
$P$ is an (always finite)
 simplicial complex $\Delta$ such that $P=\bigcup\Delta$.
Every polyhedron has a triangulation, \cite[2.1.5]{sta}.
%
%
Given a rational polyhedron $P$ and triangulations $\Delta$ and $\Sigma$ of $P$, 
we say that $\Delta$ is a {\it subdivision} of $\Sigma$ if every
simplex of $\Delta$ is contained in a simplex of $\Sigma$.
Suppose an $n$-cube  $K\subseteq \Rn$ is contained in another
$n$-cube  $K'\subseteq \Rn$. Then every triangulation
$\Delta$ of $K$ has an {\it extension} $\Delta'$ to a triangulation
of $K'$, in the sense that  $\Delta=\{T\in \Delta'\mid T\subseteq K\}.$ 
A continuous
function $f\colon K\to \mathbb R$ is {\it $\Delta$-linear}
if it is linear (in the affine sense) on each simplex of $\Delta.$
Via the extension $\Delta',$  $f$ can be extended
to a $\Delta'$-linear function on $K'.$
A  function $g\colon K\to \mathbb R$ is {\it piecewise linear}
if it is $\Delta$-linear for some triangulation $\Delta$
of $K$. 
We denote by
 $\mathcal R(K)$  the Riesz space
of all  piecewise linear functions on $K$, with the
pointwise operations of the Riesz space  $\mathbb R$.

More generally, let 
$X$ be a nonempty compact subset of $\Rn.$
Let $K\subseteq \Rn$ be an  (always closed)
 $n$-cube containing $X$. We
momentarily denote
by $\mathcal R(K)\restrict X$ the Riesz space
of restrictions to $X$ of the functions in $\mathcal R(K).$
If $L \subseteq \Rn$ is an  
$n$-cube   containing $ K$, then
$\mathcal R(K)\restrict X=\mathcal R(L)\restrict X$.
(For the nontrivial direction, the above mentioned
 extension property of triangulations yields
$\mathcal R(L)\restrict K=\mathcal R(K)$.)
Thus, if  both $n$-cubes $K$  and $L$ contain $X$, letting
$M \subseteq \Rn$
 be an $n$-cube containing both $K$ and $L$, we obtain
$\mathcal R(K)\restrict X=\mathcal R(L)\restrict X=\mathcal R(M)\restrict X$, independently of the ambient cube   $K\supseteq X$. 
Without fear of ambiguity
we may then use the  notation
 $\mathcal R(X)$  for  the Riesz
space of functions thus obtained.
Each  $f\in \mathcal R(X)$ is said to be a
{\it piecewise linear function on $X$.} 
 It follows that   $f$ is continuous. 

\begin{lemma}
\label{lemma:esami}
There is a one-one correspondence
   $x\mapsto \mathfrak m_x,\,\,\,
 \mathfrak m\mapsto x_{\mathfrak m}$ 
 between
 maximal ideals $\mathfrak m$ of $ \mathcal R(X)$  and
 points $x$ of $X$. Specifically, $\mathfrak m_x$
 is the set of all functions in   $ \mathcal R(X)$ vanishing
 at $x$;  conversely,
 $x_\mathfrak m$ is the only element in the intersection of the zerosets
 $Zh=h^{-1}(0)$ of all functions  $h\in \mathfrak m$.
\end{lemma}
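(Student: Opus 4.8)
The plan is to realize $\mathfrak m_x$ as the kernel of the evaluation homomorphism, and to recover the point attached to a maximal ideal by a compactness argument on zerosets. First I would record two basic facts about $R:=\mathcal R(X)$. The constant function $1$ is a strong unit: every $f\in R$ is continuous on the compact set $X$, hence bounded, hence dominated by $N\cdot 1$ for some integer $N$. And for each $x\in X$ the evaluation map $\mathrm{ev}_x\colon R\to\mathbb R$, $f\mapsto f(x)$, is a surjective Riesz homomorphism, surjective because $R$ contains the constant functions. Since $\mathbb R$ is a simple Riesz space, the kernel $\mathfrak m_x=\{f\in R: f(x)=0\}$ is a maximal ideal, and $R/\mathfrak m_x\cong\mathbb R$.

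For injectivity of $x\mapsto\mathfrak m_x$, and simultaneously for the uniqueness clause in the statement, I would use a separating affine function: if $x\neq y$ in $X$ then $x_j\neq y_j$ for some coordinate $j$, so the affine (hence piecewise linear) function $g(z)=z_j-x_j$ lies in $\mathfrak m_x$ while $g(y)\neq 0$. Thus $\mathfrak m_x\neq\mathfrak m_y$; and moreover no point other than $x$ can belong to $\bigcap_{h\in\mathfrak m_x} Zh$, since such a point would have to lie in $Zg$.

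For surjectivity, let $\mathfrak m$ be a maximal, in particular proper, ideal of $R$. I would show that the family of zerosets $\{Zh: h\in\mathfrak m\}$ has the finite intersection property. Given $h_1,\ldots,h_k\in\mathfrak m$, the function $g=|h_1|\vee\cdots\vee|h_k|$ again lies in $\mathfrak m$, because $0\le g\le|h_1|+\cdots+|h_k|\in\mathfrak m$ and ideals are solid, and $Zg=Zh_1\cap\cdots\cap Zh_k$. If this intersection were empty then $g$ would be a strictly positive continuous function on the compact set $X$, so $g\ge c\cdot 1$ for some real $c>0$; solidity of $\mathfrak m$ would then force $1\in\mathfrak m$, contradicting properness. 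Since $X$ is compact and each $Zh$ is closed, $\bigcap_{h\in\mathfrak m}Zh$ is nonempty; choosing $x$ in it gives $\mathfrak m\subseteq\mathfrak m_x$, and maximality of $\mathfrak m$ together with properness of $\mathfrak m_x$ yields $\mathfrak m=\mathfrak m_x$. The uniqueness remark from the previous paragraph then shows $\bigcap_{h\in\mathfrak m}Zh=\{x_{\mathfrak m}\}$, which identifies the inverse map and completes the bijection.

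The argument is essentially routine; the one point needing care is the step from ``the zerosets of $\mathfrak m$ have empty total intersection'' to a contradiction, which is precisely where compactness of $X$ (to produce the positive lower bound $c$) and the presence of the strong unit $1$ are used together. Alternatively, the whole statement is an instance of the classical Yosida representation for Riesz spaces with strong unit (see \cite{luxzaa}) applied to $\mathcal R(X)$, once one checks that the maximal spectrum of $\mathcal R(X)$ is homeomorphic to $X$; the self-contained proof above amounts to carrying out that check.
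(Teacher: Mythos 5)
Your proof is correct, but it takes a more self-contained route than the paper, which disposes of the lemma in two lines: it observes that the functions in $\mathcal R(X)$ separate the points of $X$ and that the constant $1$ is a strong unit, and then simply invokes the classical Yosida--Kakutani correspondence as \cite[27.7]{luxzaa}. You verify exactly the same two hypotheses (your coordinate functions $z\mapsto z_j-x_j$ witness separation, and boundedness of continuous functions on the compact set $X$ gives the strong unit), but instead of citing the representation theorem you carry out its proof for this particular Riesz space: evaluation homomorphisms give $x\mapsto\mathfrak m_x$, and the finite-intersection-property argument on zerosets (using solidity of ideals and the lower bound $g\ge c\cdot 1$ forced by compactness) recovers a point from a maximal ideal. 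Your last paragraph correctly identifies this as an unwinding of the cited theorem. What your version buys is independence from \cite{luxzaa} and an explicit display of where compactness and the strong unit enter; what the paper's version buys is brevity and a clean reduction to a standard result that it also uses elsewhere. Either is acceptable; there is no gap.
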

\begin{proof}   The functions in
 $ \mathcal R(X)$  separate points, and the constant
 function $1$ is a strong unit in $ \mathcal R(X)$.
 Now apply
 \cite[27.7]{luxzaa}. 
 \end{proof}

The following elementary result deals with 
  the special case  $X\subseteq \mathbb{R}^2$.
It is an adaptation to Riesz spaces 
 of the MV-algebraic result  \cite[Theorem 3.1(ii)]{busmun},
 and will have a key role in the proof of the much stronger Theorem
 \ref{theorem:main}.

\begin{lemma}
\label{lemma:busmun}
Let $X\subseteq \mathbb{R}^2$ be a 
nonempty compact set. If 
 the Riesz space $\mathcal R(X)$
 has a principal
ideal that  is not
an intersection of maximal ideals,  then 
$X$ has  an outgoing Severi-Bouligand  tangent  at 
some point $x\in X$. 
\end{lemma}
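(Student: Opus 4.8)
The plan is to translate the algebraic hypothesis on $\mathcal R(X)$ into a geometric statement about a sequence of points in $X$ that accumulates in a ``bad'' way, and then to extract an outgoing Severi-Bouligand tangent from that sequence. First I would unravel what it means for a principal ideal $(f)$ of $\mathcal R(X)$ to fail to be an intersection of maximal ideals. Since $\mathcal R(X)$ is semisimple (Lemma~\ref{lemma:esami} and \cite[27.7]{luxzaa}), the intersection of all maximal ideals containing $(f)$ is exactly the ideal of functions vanishing on the zeroset $Zf$. Thus the hypothesis says there is some $g\in\mathcal R(X)$ with $g\equiv 0$ on $Zf$, yet $g\notin(f)$, i.e.\ $|g|\le \lambda|f|$ fails for every $\lambda>0$; equivalently, there exist points $x_i\in X$ with $|g(x_i)|/|f(x_i)|\to\infty$. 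Passing to a subsequence, $x_i\to x$ for some $x\in X$; since $g$ vanishes on $Zf$ and is continuous, $x\in Zf$, so $f(x)=0=g(x)$, and $f(x_i)\neq 0$ for all large $i$ (so in particular $x_i\neq x$). Passing to a further subsequence using compactness of the unit sphere, we may also assume $(x_i-x)/\|x_i-x\|\to u$ for a unit vector $u$. This $u$ is the candidate Severi-Bouligand tangent, and $\{x_i\}$ witnesses $\lim_{i\to\infty}(x_i-x)/\|x_i-x\| = u$ as in Definition~\ref{definition:severi}.

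The heart of the argument is showing $u$ is \emph{outgoing}, i.e.\ $\conv(x,x+\mu u)\cap X=\{x\}$ for some $\mu>0$. This is where the two-dimensionality and the piecewise-linearity of $f,g$ are essential. Fix a triangulation $\Delta$ of an ambient $2$-cube $K\supseteq X$ on which both $f$ and $g$ are linear on each simplex; let $T$ be the (unique, for large $i$, after a further subsequence) simplex of $\Delta$ whose relative interior contains $x_i$ — or rather, pass to a subsequence so that all $x_i$ lie in one fixed simplex $T$ of $\Delta$ with $x\in T$. By Theorem~\ref{theorem:contains-u-simplex} (applied with $k=1$), $T$ contains $\conv(x,x+\lambda u)$ for some $\lambda>0$, so $x+\lambda u\in T$. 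On $T$, $f$ and $g$ are restrictions of affine functions; write $f(x+tv)=\langle a,v\rangle t$ and $g(x+tv)=\langle b,v\rangle t$ for the directional behavior from $x$ (using $f(x)=g(x)=0$). The condition $|g(x_i)|/|f(x_i)|\to\infty$ along directions $(x_i-x)/\|x_i-x\|\to u$ forces $\langle a,u\rangle=0$, i.e.\ $f$ vanishes to first order along $u$ inside $T$; since $f$ is affine on $T$ and $f(x)=0$, this means $f\equiv 0$ on $\conv(x,x+\lambda u)$, so $\conv(x,x+\lambda u)\subseteq Zf\cap T$.

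Now I use the algebraic hypothesis once more together with the special geometry of $\mathbb R^2$: I claim $u$ cannot be approached by points of $X$ other than along this segment — more precisely, that for a possibly smaller $\mu>0$, no point of $X\setminus\{x\}$ lies on $\conv(x,x+\mu u)$. Suppose not: then $X$ contains points of the form $x+t_j u$ with $t_j\downarrow 0$, $t_j>0$. Each such point is in $Zf$ (by the previous paragraph, restricting to $T$ and shrinking). Since $g$ vanishes on $Zf$, we get $g(x+t_j u)=0$; but $g$ is affine on $T$ with $g(x)=0$, so either $g\equiv 0$ on the whole segment $\conv(x,x+\lambda u)$, or $g$ is nonzero at the interior points. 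The former gives a contradiction because then, on $T$, $|g|\le C|f|$ would hold along \emph{all} directions where $f$ does not vanish to first order — and combined with the structure on the finitely many simplices of $\Delta$ around $x$, one deduces $g\in(f)$ in $\mathcal R(X)$, contradicting the hypothesis. The clean way to organize this last step is: the hypothesis $g\notin(f)$ together with finiteness of $\Delta$ forces the ``offending direction'' $u$ to be such that $f$ has an \emph{isolated zero} behavior, which in $\mathbb R^2$ (where the link of $x$ is one-dimensional) pins down a single ray, and along that ray $X$ must meet $Zf$ only at $x$ for the quotient $\mathcal R(X)/(f)$ to be non-archimedean via that particular witness. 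I expect the main obstacle to be exactly this final bookkeeping: carefully choosing the triangulation and subsequences so that ``$u$ is outgoing'' follows, and ruling out the degenerate case where $X$ fills the segment $\conv(x,x+\mu u)$ — here one argues that if it did, then along that segment $f$ and $g$ are both affine and vanish at $x$, and a short computation shows $|g|\le\lambda|f|$ would be forced on that segment, which propagated through the finite triangulation would give $g\in(f)$, the desired contradiction.
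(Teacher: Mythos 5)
Your overall strategy is the paper's: extract sample points $x_i\in X$ from the failure of $|g|\le m|f|$ (your $f$ is the generator, your $g$ the witness vanishing on $Zf\cap X$ but not in $(f)$ --- the paper uses the opposite names), pass to subsequences so that $x_i\to x$ and $(x_i-x)/\|x_i-x\|\to u$, fix a triangulation on which $f$ and $g$ are simplexwise affine, place infinitely many $x_i$ in one $2$-simplex $T$, and show that $f$ vanishes identically on a segment $\conv(x,x+\lambda u)\subseteq T$ because $|g(x_i)|/|f(x_i)|\to\infty$ forces the directional derivative of $f$ along $u$ to vanish. Up to that point the argument is sound, and invoking Theorem~\ref{theorem:contains-u-simplex} with $k=1$ to get the segment inside $T$ is a legitimate shortcut for what the paper does by hand with ${\rm Cone}(T,x)$.

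The gap is in the final step. Having assumed for contradiction that $X$ meets $\conv(x,x+\mu u)$ beyond $x$, and correctly deduced that then $g$ also vanishes identically on the segment, you aim to conclude $g\in(f)$ \emph{in $\mathcal R(X)$}, ``propagated through the finite triangulation''. That is a global statement about all of $X$ and cannot follow from data on the single simplex $T$; nothing controls $g/f$ on the other simplexes of $\Delta$, and $g\in(f)$ is precisely what is false by hypothesis. What you actually need --- and what is available --- is only a \emph{local} bound on $T$, because the offending sample points $x_i$ (with $|g(x_i)|/|f(x_i)|\to\infty$) all lie in $T$. Concretely, this is the paper's claim (c): since $T\subseteq\mathbb R^2$ is $2$-dimensional and $f,g$ are affine on $T$ and both vanish on a segment of positive length, they both vanish on the line it spans, hence are proportional on $T$ (or, as the paper argues, pick the third vertex $v$ of a $2$-simplex $S=\conv(x,x+\lambda u,v)$ containing infinitely many $x_i$, choose $m^*$ with $m^*f(v)\ge g(v)$, and observe that the affine inequality $m^*f\ge g$, valid at all three vertices, holds on all of $S$). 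Either way $|g|/|f|$ is bounded on $T\setminus Zf$, contradicting the choice of the $x_i$. Without some such local argument your proof does not close; with it, it becomes essentially the paper's proof, which avoids your case split altogether by proving unconditionally that the witness is nonzero at $x+\lambda u$ and then reading off $X\cap\conv(x,x+\lambda u)\subseteq X\cap Zf\cap\conv(x,x+\lambda u)\subseteq X\cap Zg\cap\conv(x,x+\lambda u)=\{x\}$.
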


\begin{proof}
For every element $e$ of $\mathcal R(X)$ let
$\langle e \rangle$ denote
 the principal ideal generated by~$e$.
Let $g\in \mathcal R(X)$ be such that 
the ideal $\mathfrak p=\langle g \rangle$
 is not an intersection of maximal ideals of $ \mathcal R(X)$.
 Lemma~\ref{lemma:esami} yields 
 an element 
  $f\in \mathcal R(X)$ such that 
  $f\notin \mathfrak p$ and $Zg\subseteq Zf$. 
Replacing, if necessary,
$f$ and $g$ by their absolute values $|f|$ and $|g|$, we
may assume    $f\geq 0$ and $g\geq 0$.
Let  $K  \subseteq \mathbb R^2$
 be a fixed but otherwise arbitrary closed
 square  containing $X$.
By definition of  $\mathcal R(X)$,  there are
elements  
$ 0\leq \tilde{f}\in \mathcal R(K)$
  \,\, and\,\, $0\leq \tilde{g} \in \mathcal R(K)$
 such that $\tilde{f}\restrict X=f$  and 
  $\tilde{g}\restrict X=g$.
Since $\tilde{f}\restrict X$ does not belong to $\mathfrak p$
then for each $m>0$ there is a point $x_m\in X$
 such that 
 \begin{equation}
 \label{equation:sabato}
 \mbox{$\tilde{f}(x_m)>m\cdot \tilde{g}(x_m)$.}
 \end{equation}
 Since $X$ is compact, for some
 $x\in X$
 there is   a subsequence  
  $\{x_{m_1},x_{m_2},\ldots\}$ of $\{x_{1},x_{2},\ldots\}$ such that 
\begin{equation}
\label{equation:nuovo}  
\mbox{$x_i\neq x_j$ for all $i\neq j$,
 \,\,\,  and \,\,\,$\lim_{i\to \infty}x_{m_i}=x$. }
   \end{equation}
For each  $i=1,2,\ldots, $ let  the unit vector  $u_i$
be defined by 
$$u_i=(x_{m_i}-x)/||x_{m_i}-x||.$$
Since the unit circumference
  $S^{1} =\{z\in \mathbb {R}^2\mid ||z||=1\}$ is compact, 
  it is no loss of generality
  to assume   $\lim_{i\to\infty}u_i=u$, for some $u\in S^{1}$. 
  Therefore, $u$ is a tangent of $X$ at $x$.
  There remains to be shown that $u$ is outgoing.
  To this purpose we make the following
  
  \medskip
  \noindent{\it Claim.}  There is a real
  number $\lambda>0$ such that:
\begin{itemize}
\item[(a)] $\tilde{f}$ is (affine) linear on the line segment
 $\conv(x,x+\lambda u)$;
\item[(b)] $\tilde{g}$ identically
vanishes on  $\conv(x,x+\lambda u)$;
\item[(c)] $\tilde{f}(x+\lambda u)\neq 0$.
\end{itemize}

\medskip
As a matter of fact, since
 each of  $x_{m_1},x_{m_2},\ldots$ lies in   $K$, 
 by \eqref{equation:nuovo}
 there exists $\delta>0$ such that $\conv(x,x+\delta u)\subseteq K$.
 An elementary result in polyhedral topology
 (\cite[2.2.4]{sta}) yields
   a triangulation $\Delta$ of $K$ such that 
   both functions
   $\tilde{f}$ and $\tilde{g}$ are $\Delta$-linear
    and 
   $\conv(x,x+\delta u)=\bigcup\{T
   \in\Delta\mid T \subseteq \conv(x,x+\delta u)\}.$
    Therefore, there exists $\lambda>0$ such that $\conv(x,x+\lambda u)\in\Delta$. 
We have proved that
 $\tilde{f}$ is linear in $\conv(x,x+\lambda u)$, and
  (a) is settled.

\medskip To settle (b), since
 both  functions $\tilde{g}$ and $\tilde{f}$ are continuous, we can write
$$
0\geq \tilde{g}(x)\,\,=\,\,\lim_{i\to\infty}\tilde{g}(x_i)
\,\,\leq\,\, \lim_{i\to\infty}\frac{\tilde{f}(x_i)}{m_i}\,=\,0,
$$

\smallskip
\noindent
whence
 $\tilde{g}(x)=g(x)=0$. From
  $X\cap Z\tilde{g}\subseteq X\cap Z\tilde{f}$
  we get  $\tilde{f}(x)=f(x)=0$.
Since $\Delta$ is finite set, there exists
a 2-simplex 
 $S\in\Delta$ containing
 infinitely many   elements
 $x_{n_1},x_{n_2},\ldots$
  of the set  $\{x_{m_1},x_{m_2},\ldots\}$.
    By \eqref{equation:nuovo},
$x\in S$. Further, from
 $\lim_{i\to\infty}u_{n_i}=u$ and $\conv(x,x+\lambda u)\in\Delta$
 it follows that
$\conv(x,x+\lambda u)\subseteq S$.
Therefore, 
\begin{equation}
\label{equation:v}
 \mbox{
 $S=\conv(x,x+\lambda u,v)$ for some $v\in S$.}
\end{equation}
 For some $2\times 1$-matrix
$A$ and vector  $b\in\mathbb{R}^{2}$ we can write
$\tilde{g}(z)=A z+b  \mbox{ for each } z\in S.$
Since $\lim_{i\to\infty}u_{m_i}=u$ and $\tilde{g}(x)=0$, we have
the identities
\begin{align*}
\tilde{g}(x+\lambda u )&=\lambda Au+\tilde{g}(x)=\lim_{i\to \infty} \frac{\lambda(A x_{n_i}- A x)}{||x_{n_i}-x||}=\lim_{i\to \infty} \frac{\lambda(\tilde{g} (x_{n_i})- \tilde{g} (x))}{||x_{n_i}-x||}\\
&=\lim_{i\to \infty} \frac{\lambda \tilde{g} (x_{n_i})}{||x_{n_i}-x||}=\lim_{i\to \infty} \frac{\lambda g(x_{n_i})}{||x_{n_i}-x||}.
\end{align*}
Similarly, 
$$
\tilde{f}(x+\lambda u )=\lim_{i\to \infty} \frac{\lambda f (x_{n_i})}{||x_{n_i}-x||},
$$
whence
\[0\leq \tilde{g}(x+\lambda u )=\lim_{i\to \infty} \frac{\lambda g(x_{n_i})}{||x_{n_i}-x||}\leq \lim_{i\to \infty} \frac{\lambda}{n_i}\frac{ f (x_{n_i})}{||x_{n_i}-x||}=\tilde{f}(x+\lambda u)\lim_{i\to \infty} \frac{1}{n_i}=0.
\]
Since $\tilde{g}$ is    linear 
on $\conv(x,x+\lambda u)$
and $\tilde{g}(x+\lambda u )=0=\tilde{g}(x)$,
then (b) follows.

\medskip
To prove (c),  
by \eqref{equation:sabato} we get 
 $\tilde{f}(x_{n_i})\neq 0$ for all $i$,  whence 
$\tilde{g}(x_{n_i})\neq 0$,
because $Zg\subseteq Zf$.
Then our assumptions about $S$,
together with  \eqref{equation:v},  show that
  $\tilde{g}(v)\neq 0$. Let the integer $m^*$ satisfy   
the inequality  $m^* \cdot
\tilde{g}(v)\geq \tilde{f}(v)$. If
(absurdum hypothesis)
  $\tilde{f}(x+\lambda u)=0$ then $m^* \cdot \tilde{g}(z)\geq 
  \tilde{f}(z)$ for each $z\in S$. In view of
  \eqref{equation:sabato}, this   contradicts
the existence of infinitely many elements $x_{n_i}$ in $S$.
Having thus proved (c), our claim is settled. 

 \smallskip
In conclusion, from  (a) and (c) 
it follows that  $\conv(x,x+\lambda u)\cap Z\tilde{f}=\{x\}$. 
Then from (b) we get  
\[X\cap \conv(x,x+\lambda u)=X\cap Z\tilde{g}\cap \conv(x,x+\lambda u)\subseteq X\cap Z\tilde{f}\cap \conv(x,x+\lambda u)=\{x\}, \]
thus proving that $u$ is an outgoing tangent of $X$ at $x$.
\end{proof}

\section{Tangents and strong semisimplicity}
    
Recall that a Riesz space $R$ is said to be
  {\it strongly semisimple}  if
  for every principal ideal $\langle g\rangle$
of $R$
 the quotient  $R/\langle g\rangle$
is {\it archimedean}  (i.e., the intersection of the maximal ideals
of $R/\langle g\rangle$
 is $\{0\}$).  Equivalently,  $\langle g\rangle$ is an intersection of
maximal ideals of $R$. (This follows from the canonical
one-to-one correspondence  between ideals of $R$
containing $\langle g\rangle$, and ideals of $R/\langle g\rangle$.)
  Since  $\{0\}$ is a principal ideal of $R$,
if $R$ is strongly semisimple then it is archimedean.

\medskip
The following result is the promised 
strengthening of  
 Lemma \ref{lemma:busmun}:

\begin{theorem} 
\label{theorem:main}
For any  nonempty compact  set $X\subseteq \Rn$
the following conditions are equivalent:
\begin{itemize}
\item[(i)]
$X$ has  an outgoing  tangent  at 
some point $x\in X$.  

\smallskip
\item[(ii)]  The Riesz space $\mathcal R(X)$ is not strongly
semisimple, i.e., there exists a principal ideal of 
$\mathcal R(X)$ that is not an intersection of maximal ideals.
\end{itemize}
\end{theorem}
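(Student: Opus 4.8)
The plan is to prove the two implications separately, with the harder direction being $(i)\Rightarrow(ii)$.

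\medskip

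\noindent\textbf{Direction $(i)\Rightarrow(ii)$.}
Suppose $u=(u_1,\ldots,u_k)$ is an outgoing tangent of $X$ at $x$, determined by a sequence $\{x_i\}\subseteq X$, with $\lambda_1,\ldots,\lambda_k>0$ witnessing the ``outgoing'' condition: the simplex $C=\conv(x,x+\lambda_1u_1,\ldots,z_k)$ (where $z_j=x+\lambda_1u_1+\cdots+\lambda_ju_j$) and its facet $C'=\conv(x,\ldots,z_{k-1})$ satisfy $C\cap X=C'\cap X$. Fix an $n$-cube $K\supseteq X$ with $C\subseteq K$. The plan is to produce $g\in\mathcal R(X)$ with $\langle g\rangle$ not an intersection of maximal ideals, equivalently (via Lemma~\ref{lemma:esami}) to produce $f,g\in\mathcal R(X)$ with $f\ge0$, $g\ge0$, $Zg\subseteq Zf$ but $f\notin\langle g\rangle$. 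By Theorem~\ref{theorem:contains-u-simplex}, any simplex $T\in\Delta$ containing infinitely many $x_i$ contains a subsimplex $\conv(x,x+\lambda_1'u_1,\ldots)$, so after shrinking the $\lambda_j$ we may assume that for a triangulation $\Delta$ of $K$ compatible with $C$ and $C'$, the simplex $C$ itself (or a face of it) lies inside a simplex of $\Delta$ meeting the tail of $\{x_i\}$. Now choose $\tilde g\in\mathcal R(K)$, $\tilde g\ge0$, that vanishes exactly on $C'$ near $x$ (e.g.\ distance-to-$\aff C'$ truncated, intersected appropriately) and is positive on $C\setminus C'$; and choose $\tilde f\ge0$ in $\mathcal R(K)$ that vanishes exactly at $x$ among the points of $C$ and is linear and positive along a neighborhood of $z_k$, engineered so that $Z\tilde g\cap X\subseteq Z\tilde f\cap X$ and no bound $\tilde f\le m\tilde g$ holds on $C$ (the ratio $\tilde f/\tilde g\to\infty$ along $\conv(x,z_k)$ approaching $x$ along the $u_k$-direction inside $C$ where $\tilde g$ is ``flat of higher order'' while $\tilde f$ is not). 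Setting $f=\tilde f\restrict X$, $g=\tilde g\restrict X$, the inclusion $Zg\subseteq Zf$ on $X$ uses $C\cap X=C'\cap X$ together with $Z\tilde g\supseteq C'$; and $f\notin\langle g\rangle$ because any witness point sequence running up the open edge of $C$ toward $x$ would have to lie in $X$ — but the outgoing condition forces those points out of $X$ only off $C$, so one instead argues abstractly: if $f\le mg$ on $X$ then by continuity $\tilde f\le m\tilde g$ on $\cl(X)$... this is where care is needed, and I expect the construction of $\tilde f,\tilde g$ with the precise vanishing behavior to be the main technical obstacle, handled as in Lemma~\ref{lemma:busmun} but now using the $k$-dimensional simplex $C$ and Theorem~\ref{theorem:contains-u-simplex} in place of the segment.

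\medskip

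\noindent\textbf{Direction $(ii)\Rightarrow(i)$.}
This is the generalization of Lemma~\ref{lemma:busmun} from $\mathbb R^2$ to $\Rn$, and the strategy mirrors that proof. Assume $\langle g\rangle$ is not an intersection of maximal ideals; by Lemma~\ref{lemma:esami} get $f,g\in\mathcal R(X)$, $f,g\ge0$, $Zg\subseteq Zf$, $f\notin\langle g\rangle$, with representatives $0\le\tilde f,\tilde g\in\mathcal R(K)$ on an $n$-cube $K\supseteq X$. For each $m$ pick $x_m\in X$ with $\tilde f(x_m)>m\tilde g(x_m)$; extract a convergent subsequence $x_{m_i}\to x$ with distinct terms. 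Now iterate the Frenet construction: passing to further subsequences, $(x_{m_i}-x)/\|x_{m_i}-x\|\to u_1$; then $(x_{m_i}-x-\proj_{\mathbb Ru_1}(x_{m_i}-x))/\|\cdots\|\to u_2$; and so on — at each stage compactness of the relevant sphere in the orthogonal complement lets us extract a limit, and we continue as long as the normalized residuals are defined. The key is to choose $k$ maximal so that this produces a genuine Frenet $k$-frame $u=(u_1,\ldots,u_k)$; then $\{x_{m_i}\}$ determines $u$, so $u$ is a tangent. To show $u$ is outgoing: take a triangulation $\Delta$ of $K$ with $\tilde f,\tilde g$ both $\Delta$-linear; a simplex $S\in\Delta$ contains infinitely many $x_{m_i}$; by Theorem~\ref{theorem:contains-u-simplex}, $S$ (hence the smallest face of $S$ containing the relevant points, which contains $x$) contains $C=\conv(x,x+\lambda_1u_1,\ldots,z_k)$ for suitable $\lambda_j>0$. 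On $S$ both $\tilde f,\tilde g$ are affine, so as in Lemma~\ref{lemma:busmun} one shows $\tilde g\equiv0$ on $C$ (limit argument with the $1/m_i$ factor kills $\tilde g$ along the directions realized by $x_{m_i}$, and affineness propagates it over all of $C$), $\tilde f(x)=0$ (from $Zg\subseteq Zf$), and $\tilde f$ not identically $0$ on $C$ (else $m^*\tilde g\ge\tilde f$ on $S$ for large $m^*$, contradicting $\tilde f(x_{m_i})>m_i\tilde g(x_{m_i})$ for infinitely many $i$); in fact $\tilde f(z_k)\ne0$, and refining $\lambda_k$ we may assume $\tilde f$ vanishes on $C'=\conv(x,\ldots,z_{k-1})$ only (using maximality of $k$, so that the residual in direction $u_{k}$ is exactly where $\tilde f$ becomes nonzero). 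Then $C\cap Z\tilde f=C'$, and since $\tilde g\equiv0$ on $C$ gives $C\cap X\subseteq C\cap Z\tilde g\cap X\subseteq C\cap Z\tilde f\cap X\cap\ldots$ — more precisely $X\cap C=X\cap Z\tilde g\cap C\subseteq X\cap Z\tilde f\cap C=X\cap C'$, while trivially $X\cap C'\subseteq X\cap C$, so $X\cap C=X\cap C'$, which is exactly the outgoing condition.

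\medskip

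\noindent\textbf{Where the difficulty lies.}
The $(ii)\Rightarrow(i)$ direction is essentially bookkeeping on top of Lemma~\ref{lemma:busmun} and Theorem~\ref{theorem:contains-u-simplex}; the one subtle point is arranging that the number $k$ of Frenet vectors extracted is precisely the one for which the ``outgoing facet'' relation $C\cap X=C'\cap X$ can be made to hold, i.e.\ linking the linear-algebraic halting of the Frenet process to the order of vanishing of $\tilde f$ relative to $\tilde g$ along $C$. For $(i)\Rightarrow(ii)$ the obstacle is dual: given the geometric outgoing tangent, one must fabricate piecewise linear $\tilde f,\tilde g$ on $K$ with exactly the right zerosets and with $\tilde f/\tilde g$ unbounded on $X$, which requires choosing a triangulation fine enough that $C$ and $C'$ are subcomplexes and then defining $\tilde g$ to vanish to first order transversally to $C'$ inside $C$ while $\tilde f$ vanishes only at $x$ along $\conv(x,z_k)$ — the verification that this forces $f\notin\langle g\rangle$ and $Zg\subseteq Zf$ on $X$ (not just on $C$) is the part demanding the most care, and is handled by combining the defining property of $\mathcal R(X)$ (restriction from $\mathcal R(K)$) with the hypothesis $C\cap X=C'\cap X$.
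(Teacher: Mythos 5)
Your direction (i)$\Rightarrow$(ii) has a genuine gap, and it is exactly at the point you flag as ``where care is needed.'' First, the zero sets you propose are wrong for $k\geq 2$: if $\tilde g$ vanishes on $C'$ and $\tilde f$ vanishes only at $x$ among the points of $C$, then $Zg\cap X\subseteq Zf\cap X$ forces $C'\cap X\subseteq\{x\}$, which the outgoing condition does not guarantee (it only gives $C\cap X=C'\cap X$, and this common intersection can be much larger than $\{x\}$ when $k\geq 2$). Second, and more seriously, your mechanism for $f\notin\langle g\rangle$ --- unboundedness of $\tilde f/\tilde g$ along $\conv(x,z_k)$ --- cannot work, because by the outgoing condition the points of $C\setminus C'$ are precisely the ones \emph{not} in $X$, so they impose no constraint on whether $f\leq mg$ holds in $\mathcal R(X)$; you notice this and then the argument stops. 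The paper's resolution inverts your picture: the \emph{generator} is $f_1$ with $Zf_1=C$ (the whole $k$-simplex), and the witness is $f_2$ with $Zf_2=C'$ and $f_2$ affine on $C$; then $Zf_1\cap X=C\cap X=C'\cap X=Zf_2\cap X$ handles membership in all maximal ideals, and if $f_2\leq mf_1$ held on $X$, the polyhedron $L=\{f_2\leq mf_1\}$ would contain the determining sequence $\{x_i\}$, hence some simplex $T_j$ of a decomposition of $L$ contains infinitely many $x_i$; Theorem~\ref{theorem:contains-u-simplex} puts a simplex $M=\conv(x,x+\mu_1u_1,\ldots)$ inside $T_j$, Proposition~\ref{proposition:filterbase} makes $C\cap M$ a $k$-simplex, and there $f_1=0$ forces $f_2=0$, which by affineness of $f_2$ on $C$ propagates to $f_2\equiv 0$ on all of $C$, contradicting $Zf_2=C'$. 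This use of the sequence $\{x_i\}$ (rather than of points of $C$) is the missing idea.

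For (ii)$\Rightarrow$(i) you take a genuinely different route from the paper: you generalize the proof of Lemma~\ref{lemma:busmun} directly to $\Rn$, whereas the paper keeps that lemma two-dimensional, applies it as a black box to the image $g(X)\subseteq\mathbb R^2$ under $g=(f_1,f_2)$, and then pulls the planar outgoing tangent back to $X$ through a triangulation on which $(f_1,f_2)$ is affine. Your direct route is salvageable and arguably cleaner, but the one load-bearing step is underspecified: ``choose $k$ maximal'' is not the right halting rule. The correct rule is to stop at the \emph{first} $k$ for which $A_fu_k\neq 0$, where $A_f$ is the linear part of $\tilde f$ on the simplex $S$ of the triangulation containing infinitely many $x_{m_i}$ (such a $k$ exists, since otherwise $\tilde f$ would vanish on the tail of the sequence, contradicting $\tilde f(x_{m_i})>m_i\tilde g(x_{m_i})\geq 0$); with that choice one gets $A_fu_j=0$ for $j<k$, hence $Z\tilde f\cap C=C'$, and the $1/m_i$ estimate then kills $A_gu_j$ for all $j\leq k$, giving $\tilde g\equiv 0$ on $C$ and the outgoing relation $X\cap C=X\cap C'$. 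Until that criterion is pinned down, the claim ``$\tilde f$ vanishes on $C'$ only, using maximality of $k$'' is unsupported.
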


\begin{proof} 
Without loss of generality,  $X\subseteq \I^n.$
(This trivially follows because any $n$-cube in $\mathbb R^n$
is  PL-homeomorphic to any other $n$-cube).

\smallskip
(i)$\Rightarrow$(ii)
By Definition \ref{definition:tangent}, for some  $x\in \Rn$
and $k$-tuple    $u=(u_1,\ldots,u_k)$ 
of pairwise orthogonal unit vectors in $\Rn$, there is a
 sequence $\{x_i\}$ of points in $\Rn$ converging to $x,$
  such that $u$ is the  Frenet $k$-frame of $\{x_i\}$.
 Further, there are    reals $\lambda_1,\ldots,\lambda_k>0$ such that  the simplex
$C=\conv(x,x+\lambda_1u_1,\ldots, x+\lambda_1u_1+\cdots+\lambda_{k}u_{k})$ and its facet $C'=\conv(x,x+\lambda_1u_1,\ldots, x+\lambda_1u_1+\cdots+\lambda_{k-1}u_{k-1})$ 
satisfy  $
C\cap X=C'\cap X.$

Let  $f_1$ and $f_2$ be piecewise linear   
functions defined on $\I^n$, taking their values
in $\mathbb R_{\geq 0}=\{x\in \mathbb R\mid x\geq 0\}$
  and satisfying the conditions
  \begin{equation}
  \label{equation-zerosets}
  \mbox{$Zf_1=f_1^{-1}(0)=C$, \,\,\, $Zf_2=C', $\,\,
and \, $f_2$ is (affine)  linear over $C$.}
\end{equation}
The existence of   $f_1$ and $f_2$
follows from   \cite[2.2.4]{sta}.
Both restrictions
$f_2\restrict X$ and $f_1\restrict X$ are elements of
$ \mathcal R(X).$ By construction,
\begin{equation}
\label{equation:further}
Zf_1\cap X=Zf_2\cap X.
\end{equation}

\medskip
We {\it claim} that the principal ideal $\mathfrak p=\langle
f_1\restrict X\rangle$ of
$ \mathcal R(X)$ 
  generated by $f_1\restrict X$ does not
  coincide with the intersection of all maximal ideals of $ \mathcal R(X)$
 containing $\mathfrak p.$

 \medskip
By  \eqref{equation:further} together with
 Lemma \ref{lemma:esami},
 $f_2\restrict X$ belongs to all maximal
ideals of $\mathcal R(X)$
 containing  $\mathfrak p.$
So our claim will be settled once we prove 
\begin{equation}
\label{equation:there-remains}
 f_2\restrict X\not\in \mathfrak p.
 \end{equation}
To this purpose, arguing by way of contradiction,
suppose    $f_2\restrict X\leq mf_1\restrict X$ for some
$m=1,2,\ldots .$  Since $f_1$ and $f_2$ are (continuous)
piecewise linear,  
the set  $L=\{x\in \I^n \mid f_2(x)\leq mf_1(x) \}$   is a 
union of simplexes  
$T_1\cup\cdots\cup T_r$.
Necessarily for some $j=1,\ldots,r$ the simplex
$T_j$ contains infinitely many points  
of the sequence  $\{x_i\}$. This subsequence
$\{x_t\}$ 
still converges to  $x\in T_j$, and
 $u$ is its  Frenet $k$-frame.
{Theorem~\ref{theorem:contains-u-simplex}} yields     
$\mu_1, \dots , \mu_k>0$ such that 
$T_j$  contains the simplex  
$M  =\conv(x,x+\mu_1u_1,\ldots, x+\mu_1u_1+
\cdots+\mu_ku_k)$.
Now {Proposition  \ref{proposition:filterbase}}
yields
uniquely determined  $\nu_1,\ldots,\nu_k>0$
such that 
$$C\cap M = \conv(x,
          x+ \nu_{1}u_{1},
           \,\ldots, \,\, 
           x+\nu_{1}u_{1}+
           \cdots +\nu_{k}u_{k}).$$
By \eqref{equation-zerosets},   $f_1$
identically  vanishes  on $C\cap M$.
Further, from  $L\supseteq T_j\supseteq M\supseteq
  C\cap M$ and 
 $f_2\leq mf_1$ on $L,$ it follows that $f_2=0$  
   on $C\cap M$. 
The two simplexes $C\cap M$ and
$C$ have the  same dimension $k$, and 
  $f_2$ is (affine)  linear on $C\supseteq C\cap M.$
Therefore, 
  $f_2=0$     on $C$,  which contradicts
 $Zf_2=C'$. We have thus proved \eqref{equation:there-remains},
 settled our claim, 
 and completed the proof of
 (i)$\Rightarrow$(ii).

\bigskip

(ii)$\Rightarrow$(i) 
By hypothesis, there is a function 
$f_1\in \mathcal R([0,1]^n)$  
such that
  the
 principal ideal 
 $\langle f_1\restrict X\rangle$
 of $\mathcal R(X)$ generated by the restriction 
 $f_1\restrict X$ is not an intersection  of  maximal
 ideals  of $\mathcal R(X)$. Thus there is  
$f_2 \in\mathcal R([0,1]^n)$ whose restriction
$f_2\restrict X$
 does not belong to  the
principal ideal $\langle f_1\restrict X\rangle$
generated by  $f_1\restrict X$,   but
 belongs to
 all maximal ideals of $\mathcal R(X)$
 containing $\langle f_1\restrict X\rangle$.
  By Lemma \ref{lemma:esami}, 
 $Zf_2\restrict {X}=Zf_1\restrict X$, i.e., $X\cap Zf_2=X\cap Zf_1$.

 \smallskip

Let the map $g\colon X\to\mathbb{R}^2$ be defined by
\begin{equation}
\label{equation:map}  
g(x)=(f_1(x),f_2(x)) \mbox{ for all } x \in X.
\end{equation}
 Let  
 $\iota\colon  \mathcal R(g(X))\to \mathcal R(X)$ be
defined by $\iota(h)=h\circ g$ for all $h\in \mathcal R(g(X))$,
where $\circ$ denotes composition.
 It is easy to see that $\iota$ is a  Riesz space
  homomorphism of  $ \mathcal R(g(X))$ into $\mathcal R(X)$.
  Letting
   $\pi_1,\pi_2\colon \mathbb{R}^2\to \mathbb{R}$ be
    the  canonical projections  (=coordinate functions), we
    have the identities
    $f_1\restrict {X}=\iota(\pi_1\restrict {g(X)} )$ and $f_2\restrict {X}=\iota(\pi_2\restrict {g(X)} )$. 
Whenever  $h\in  \mathcal R(g(X))$,
 $\iota(h)=0$ and $z\in g(X)$, there exists $x\in X$ 
 such that $g(x)=z$.  Then
  $h(z)=h(g(x))=(\iota(h))(x)=0$ and
     $\iota$ is one-to-one.  Actually, 
     $\iota$ is an isomorphism between 
      $\mathcal R(g(X))$ and the Riesz subspace
       of $\mathcal{R}(X)$ generated by $\{f_1\restrict {X},
       f_2\restrict {X}\}$.
It follows that the principal ideal $\mathfrak  p$  of
$\mathcal R(g(X))$ generated by $\pi_1\restrict  {g(X)}$ is not  an intersection of maximal ideals of $\mathcal R(g(X))$: 
specifically, 
$\pi_2\restrict {g(X)}$ belongs to all maximal ideals
containing $\mathfrak p,$  but does not belong to
$\mathfrak p$.
By Lemma~\ref{lemma:busmun},
\begin{equation}
\label{equation:malta} 
\mbox{\rm $g(X)$  has a 
Severi-Bouligand outgoing  tangent. }
\end{equation}

There remains to be proved that $X$ has an outgoing tangent.
To help the reader, the
 long proof is subdivided into two parts.

\medskip 

$$
\boxed{\mbox{{Part 1:}  {Construction of a tangent $u$
of $X$.}}}
$$

\medskip
\noindent 
 By \eqref{equation:malta} and
  Definition~\ref{definition:tangent} with $k=1$
(which is the same as Definition \ref{definition:severi}), 
for some point $y^* \in\mathbb{R}^2$, 
 unit vector  $v^* \in\mathbb{R}^2$, 
 sequence $\{y_i\}\subseteq \mathbb R^2$
 converging to  $y^*$, 
 and   $\mu>0$, we can write      
   \begin{equation} 
   \label{equation:dopotutto}
   \lim_{i\to \infty}(y_i-y^*)/||y_i-y^*||=v^*
 \,\,\,  \mbox{ and } \,\,\,
   \conv(y^*,y^*+\mu v^*)\cap g(X)=\{y^*\}.
   \end{equation}

\noindent
By \eqref{equation:map}, 
$g$ is the restriction to $X$ of
the function
   $f =(f_1,f_2) \colon \cube\to \mathbb R^2.$
Since (each component of) $f$ is  piecewise linear, 
then $f$ is continuous, and
both  sets $ f^{-1}(y^*)$  and
$ f^{-1}
(\conv(y^*,y^*+\mu v^*))$
are polyhedra in   $\I^n$.
An elementary result in polyhedral topology
(\cite[2.2.4]{sta}) yields a 
 triangulation $\Delta$ of $[0,1]^n$ having the
following properties:

\begin{itemize}
\item 
$f$ is (affine)  linear over each simplex of $\Delta$, 

\smallskip 
\item
 $f^{-1}(y^*)
=\bigcup\{R \in\Delta\mid R \subseteq  f^{-1}(y^*)\}$,
and

\smallskip
\item
$ f^{-1}
(\conv(y^*,y^*+\mu v^*))
=\bigcup\{U\in\Delta\mid U\subseteq  f^{-1}
(\conv(y^*,y^*+\mu v^*)) \}.$ 
\end{itemize}
For some
 $n$-simplex $T\in \Delta$, the set 
$\{i\mid f^{-1}(y_i)\cap T\cap X\}=
\{i\mid g^{-1}(y_i)\cap T\}$
 is infinite. 
Let $z_0,z_1,\ldots$ be a converging sequence of 
elements of $T$ such that 
$f(z_0),f(z_1),\ldots$
 is a subsequence of $y_0,y_1,\ldots$.
 Without loss of generality this subsequence
 coincides with the sequence $\{y_i\}$, and we
 can write
 \begin{equation}
 \label{equation:z-y}
 g(z_i)=y_i.
 \end{equation}
 Letting   $z^*=\lim_{i\to \infty }z_i$ we have
   \begin{equation}
   \label{equation:lorena}
 z^*\in X\cap T
 \,\,\mbox{  and  }\,\, y^*= f(z^*)=g(z^*).
   \end{equation}
The  linearity of $f$  on $T$  yields
a  $2\times n$   matrix  $A$,  together with 
a vector   $b\in \mathbb R^2$
such that
$
\mbox{ for each } t\in T,\,\,\, f(t)=At+b.
$

 \medskip
  \noindent{\it Claim.}
For some $k\in \{1,\ldots,n\}$  there is a $k$-tuple
of  pairwise orthogonal unit
vectors  $u_i \in \mathbb R^n,\,\,\, (1\leq i\leq k)$ 
such that:
\begin{itemize}
\item
  $Au_j=0$  for each $1\leq j<k$,  
\item
 $Au_k\neq 0$, 
\item
  $u=(u_1,\ldots,u_k)$  is a tangent of $X$ at $z^*$, 
  determined by a suitable subsequence of $z_0,z_1,\ldots$,
  in the sense of Definition \ref{definition:tangent}.
  \end{itemize}

\medskip
The vectors  $u_1,\ldots,u_k$  are constructed 
by the following inductive procedure:

\medskip
\noindent{\it Basis Step:} 
From  $Az_i +b=y_i\neq y^*=Az^*+b$ 
it follows that  $z_i\neq z^*$ for each $i$,  and hence  
every vector  $z_i^{1}=(z_i-z^*)/||z_i-z^*||$ 
is well defined. 
Since the $(n-1)$-dimensional unit sphere
$S^{n-1}\subseteq \Rn$ is compact, it is no
loss of generality to assume that  
 the sequence
$z_0^{1},z_1^{1},\ldots$ converges to some unit 
vector $u_{1}$. 
It follows that $u_1$ is a tangent of $X$ at $z^*$. 
If $Au_1\neq 0$,  upon setting $u=u_1$
the claim is proved.
If $Au_1= 0$ we proceed inductively.

\medskip
\noindent{\it Induction Step:} 
Having  constructed 
a tangent $u(l)=(u_1,\ldots,u_l)$ of $X$ at $z^*$ with $Au_i=0$ for each $i\in\{1,\ldots,l\}$, we first observe that
  $l<n$. (For otherwise,  the $u_j$ 
  would constitute an
  orthonormal basis of $\Rn$, whence 
    $A$  is the zero matrix, and  $Ax+b=b$ for each $x\in \Rn, $ 
   which contradicts $Az_i+b\neq Az^*+b$.)
Let  $\rho_1,\ldots,\rho_l$ be arbitrary real numbers. From 
\begin{equation}
\label{equation:identities}
 A(z^*+\rho_1 u_1+\cdots+\rho_l u_l)+b
 =A(z^*)+b=g(z^*)\neq g(z_i)=A(z_i)+b,
\end{equation}
it  follows that   no
$z_i$ lies  in the affine space
$z^*+\mathbb{R}u_1+\cdots +\mathbb{R}u_l$,
i.e.,  $z_i-z^*\notin\mathbb{R}u_1+\cdots +\mathbb{R}u_l $.
For each $i$, the unit vector
 $$
 z_i^{l+1}=
 \frac{z_i-z^*-\mathsf{proj}_{\mathbb{R}u_1
 +\cdots +\mathbb{R}u_l} (z_i-z^*)}
 {||z_i-z^*-
 \mathsf{proj}_{\mathbb{R}u_1+\cdots +\mathbb{R}u_l}(z_i-z^*)||}
 $$
is well defined.
Without loss of generality, we can write 
 $\lim_{i\to\infty}z_i^{l+1}=u_{l+1}$  
for some unit 
 vector $u_{l+1}\in \Rn$. 
By construction, 
 $u_{l+1}$ is orthogonal to each of $u_1,\ldots,u_{l}$,   
 and the $(l+1)$-tuple
 $u(l+1)=(u_1,\ldots,u_l,u_{l+1})$
  is a tangent of $X$ at $z^*$. 
In case  $Au_{l+1}\neq0$, upon setting $k=l+1$
and  $u=u(l+1)$ 
we are done.   
 In case  $Au_{l+1}= 0,$
we proceed inductively, with
  $(u_1,\ldots,u_l,u_{l+1})$ in place
  of $(u_1,\ldots,u_l)$.  Our claim
  is settled, and so is the proof
of Part 1.

$$
\boxed{\mbox{{Part 2:}  {  $u$ is an outgoing tangent of $X$.}}}
$$

\smallskip  
\noindent With the notation
of Part 1,
 for some
 $\lambda_1,\ldots,\lambda_k>0$ we prove the inclusion 
 \begin{equation}
 \label{equation:claim-two}
\conv(z^*,z^*+\lambda_1u_1,\ldots, z^*+\lambda_1u_1+\cdots+\lambda_{k}u_{k})\subseteq  T\cap f^{-1}
(\conv(y^*,y^*+\mu v^*)).
\end{equation}

As a matter of fact, by construction,
 $u=(u_1,\ldots, u_k)$ is  a  tangent of $X\cap T$ at~$z^*$.
{Theorem \ref{theorem:contains-u-simplex}}
yields  real numbers $\epsilon_1,\ldots,\epsilon_k>0$ 
 such that 
 \begin{equation}
 \label{equation:epsilon-T}
 \conv(z^*,z^*+\epsilon_1u_1,\ldots, z^*+\epsilon_1u_1+\cdots+\epsilon_{k}u_{k})\subseteq T.
 \end{equation} 
Since   $Au_j=0$ for each $j=1,\ldots,k-1$, from
\eqref{equation:lorena}-\eqref{equation:identities} we 
obtain the identities
\begin{equation} 
\label{equation:lorrain}
 y^*=g(z^*)=g(x)
\mbox{
for all }
 x\in \conv(z^*,z^*+\epsilon_1u_1,\ldots, z^*+\epsilon_1u_1+\cdots+\epsilon_{k-1}u_{k-1}).
 \end{equation} 
Recalling  \eqref{equation:z-y} we can write
\begin{align*}
0\neq Au_k&
=\lim_{i\to\infty}Az^k_i
=\lim_{i\to\infty}A\left(\frac{z_i-z^*-\mathsf{proj}_{\mathbb{R}u_1+\cdots +\mathbb{R}u_{l-1}}(z_i-z^*)}{||z_i-z^*-\mathsf{proj}_{\mathbb{R}u_1+\cdots +\mathbb{R}u_{l-1}}(z_i-z^*)||}\right)\\[0.2cm]
	&=\lim_{i\to\infty}\frac{A(z_i)-A(z^*)}{||z_i-z^*-\mathsf{proj}_{\mathbb{R}u_1+\cdots +\mathbb{R}u_{l-1}}(z_i-z^*)||}\\[0.2cm]
&=\lim_{i\to\infty}\frac{y_i-y^*}{||z_i-z^*-\mathsf{proj}_{\mathbb{R}u_1+\cdots +\mathbb{R}u_{l-1}}(z_i-z^*)||}
\cdot \frac{||y_i-y^*||}{||y_i-y^*||}\\[0.2cm]
&=\lim_{i\to\infty}\frac{y_i-y^*}{||y_i-y^*||}\cdot
\frac{||y_i-y^*||}{||z_i-z^*-\mathsf{proj}_{\mathbb{R}u_1+\cdots +\mathbb{R}u_{l-1}}(z_i-z^*)||}.
\end{align*}

\bigskip
\noindent
Since 
$0\not= v^* =
  \lim_{i\to\infty}(y_i-y^*)/||y_i-y^*||,$
for some   $\tau >0$ we obtain

$$ \tau=\lim_{i\to \infty}
 \frac{||y_i-y^*||}{||z_i-z^*-\mathsf{proj}_{\mathbb{R}u_1+\cdots +\mathbb{R}u_{l-1}}(z_i-z^*)||}\,
 \,\,\,\mbox{  and   }\,\,Au_k=\tau v^*.
$$

\smallskip
\noindent
Now the desired $\lambda$'s
in  \eqref{equation:claim-two} are given  by setting
$\lambda_j=\epsilon_j$ for $1\leq j< k$,\,\,and\,\,
$\lambda_k=\min\{\epsilon_k,\mu/\tau\}.$
Indeed, letting   
$C=\conv(z^*,z^*+\lambda_1u_1,\ldots, z^*+\lambda_1u_1+\cdots+\lambda_{k}u_{k}),$
from  \eqref{equation:epsilon-T} we  obtain
 \begin{equation}
 \label{equation:ct}
C\subseteq 
\conv(z^*,z^*+\epsilon_1u_1,\ldots, z^*+\epsilon_1u_1+\cdots+\epsilon_{k}u_{k})
\subseteq T.
\end{equation}
Further, for every  $x\in C$
there exists  $0\leq \omega\leq\lambda_k$ such that
\begin{equation}
\label{Eq:Azk}
Ax+b=Az^*+\omega Au_k +b=Az^*+b+\omega \tau v^*=y^*+\omega \tau v^*,
\end{equation}
whence
  $Ax+b\in \conv(y^*,y^*+\mu v^*)$, because  $\omega\leq \mu/\tau$.
The proof of \eqref{equation:claim-two} is complete.

\bigskip
To complete the proof that  
$(u_1,\ldots,u_k)$ is outgoing, letting 
$C'=\conv(z^*,z^*+\lambda_1u_1,\ldots, z^*+\lambda_1u_1+\cdots+\lambda_{k-1}u_{k-1}),$
we must  show $C'\cap X=C\cap X.$
By way of contradiction, suppose
$ x \in (X\cap C)\setminus (X\cap C')$.   
Then for suitable
$\xi_1,\ldots,\xi_{k-1}\geq 0\,\,\,$ and $\,\,\xi_k>0, $
we can write  $x=z^*+\xi_1u_1+\cdots+\xi_{k}u_{k}.\,\,$
By \eqref{equation:ct},  $\,\,x \in X\cap T$. 
Since $\xi_k>0$, by \eqref{Eq:Azk}  we have
$
g(x)=f(x)=Ax+b=y^*+\xi_k \tau v^*\neq y^*.
$
This contradicts the identity
$g(x)\in g(X)\cap \conv(y^*,y^*+\mu v^*)=\{y^*\},\,\,$
which follows from  \eqref{equation:dopotutto} and~\eqref{equation:lorrain}.

\bigskip
Having thus  proved that the tangent
$u$ is outgoing,  we have also
completed the proof of Part 2, as well as the
  proof of the theorem. 
\end{proof}

\section{Examples and Further Results}
\begin{proposition}
Let $I=\conv(a,b)\subseteq \mathbb R$
be an interval, and 
$\phi\colon I \to \Rn$  a $C^{2}$ function.
Then the Riesz space
 $\mathcal R(\phi(I))$ is strongly semisimple iff $\phi$ is
 (affine)  linear.
\end{proposition}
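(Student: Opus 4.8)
The plan is to prove the two implications separately, reducing the substantive one to Theorem~\ref{theorem:main}. The easy direction is immediate: if $\phi$ is affine linear then $\phi(I)$ is a single point or a line segment, hence a simplex of dimension $0$ or $1$ and a fortiori a polyhedron, so $\mathcal R(\phi(I))$ is strongly semisimple by Proposition~\ref{proposition:Poly}.

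For the converse I would argue by contraposition: assuming $\phi$ is not affine I would show that $X=\phi(I)$ has an outgoing tangent, whence $\mathcal R(X)$ is not strongly semisimple by Theorem~\ref{theorem:main}. Since $\phi\in C^{2}$ is non-affine we have $\phi''\not\equiv0$; the first step is to fix an interior point $t_0\in(a,b)$ at which $\phi'(t_0)$ and $\phi''(t_0)$ are linearly independent, and to set $x=\phi(t_0)$ and $u_1=\phi'(t_0)/||\phi'(t_0)||$. Applying the Basis Step of the proof of Theorem~\ref{theorem:eguali} to any sequence $t_i\downarrow t_0$ in $[t_0,b]\setminus\{t_0\}$ shows that $\{\phi(t_i)\}\subseteq X$ has Frenet $1$-frame $u_1$, so $u_1$ is a Severi--Bouligand tangent of $X$ at $x$. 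To see that it is outgoing, i.e.\ that $\conv(x,x+\lambda u_1)\cap X=\{x\}$ for a suitable $\lambda>0$, I would use the Taylor expansion to order $2$,
$$\phi(t)=x+\phi'(t_0)(t-t_0)+\frac{1}{2}\phi''(t_0)(t-t_0)^{2}+R(t),\qquad ||R(t)||=o((t-t_0)^{2}),$$
together with the decomposition $\phi''(t_0)=\mathsf{proj}_{\mathbb Ru_1}(\phi''(t_0))+w$, where $w\neq0$ by linear independence. The component of $\phi(t)-x$ orthogonal to $u_1$ then equals $\frac{1}{2}(t-t_0)^{2}w$ plus an $o((t-t_0)^{2})$ term, hence is nonzero for every $t\neq t_0$ in a punctured neighbourhood of $t_0$; so $\phi(t)\notin x+\mathbb Ru_1\supseteq\conv(x,x+\lambda u_1)$ for all such $t$. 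For the parameter values bounded away from $t_0$ one shrinks $\lambda$: by compactness of $I$ and closedness of $X$, if $\phi^{-1}(x)=\{t_0\}$ then $\phi$ maps $\{t:|t-t_0|\geq\delta\}$ into a compact set avoiding $x$ and a small $\lambda$ suffices; otherwise one repeats the local computation near each further preimage of $x$, the curve again leaving the line $x+\mathbb Ru_1$. This produces the required $\lambda$, hence the outgoing tangent.

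I expect the main obstacle to be the very first step of the converse --- producing, from ``$\phi$ not affine'', a point at which $\phi'$ and $\phi''$ are linearly independent --- because this genuinely fails for degenerate parametrisations: e.g.\ $\phi(t)=(e^{t},0)$ is $C^{\infty}$ and not affine, yet $\phi'$ and $\phi''$ are everywhere parallel, $\phi(I)$ is a line segment, and $\mathcal R(\phi(I))$ \emph{is} strongly semisimple. One should therefore read ``$\phi$ is affine linear'' as ``$\phi(I)$ is a line segment or a point'', or impose a mild normalisation of the parametrisation --- say arc length, under which $\phi''\perp\phi'$ identically, so that ``$\phi$ not affine'' immediately supplies a point with $\phi''(t_0)\neq0$ and hence the desired linear independence. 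A secondary, more routine matter is the bookkeeping for non-injective $\phi$ in the outgoing-ness verification, handled as indicated above.
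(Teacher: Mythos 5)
Your treatment of the easy direction and your diagnosis of the statement itself are both sound: the proposition as literally written is false ($\phi(t)=(e^{t},0)$ is $C^{2}$ and non-affine, yet $\phi(I)$ is a segment, hence a polyhedron, hence $\mathcal R(\phi(I))$ is strongly semisimple by Proposition~\ref{proposition:Poly}), and the paper's own proof --- a one-line appeal to Theorems~\ref{theorem:main} and~\ref{theorem:eguali} with no details --- silently glosses over this. Your proposed reading (``$\phi(I)$ is a point or a segment'') is the right repair, and your overall strategy for the substantive direction (produce an outgoing tangent of $X=\phi(I)$ and invoke Theorem~\ref{theorem:main}) is exactly what the paper intends.

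However, your verification that $u_{1}$ is \emph{outgoing} has a genuine gap. You dispose of parameters bounded away from $t_{0}$ by compactness, and of parameters near $t_{0}$ by the second-order Taylor expansion; but for a further preimage $t_{1}$ of $x$ you assert that ``the curve again leaves the line $x+\mathbb Ru_{1}$'', and nothing forces this. Concretely, take $\phi\colon[0,3]\to\mathbb R^{2}$ which on $[0,1]$ traverses a circular arc ending at $x=(0,0)$ with $\phi'(1)\propto(1,0)$ and $\phi''(1)\propto(0,1)$, and on $[2,3]$ traverses the straight segment $\conv((0,0),(1,0))$, with a $C^{2}$ connector in between matching first and second derivatives at the junctions. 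At the arc endpoint the vectors $\phi',\phi''$ are linearly independent and $u_{1}=(1,0)$, yet $\conv(x,x+\lambda u_{1})\subseteq X$ for all $\lambda\le 1$, so $u_{1}$ is \emph{not} outgoing at $x$; an outgoing tangent exists only at \emph{other} points of the arc. The statement you need is existential over the curve, and your argument, which fixes $t_{0}$ once and then tries to patch around further preimages, does not establish it. A second, smaller issue: a point with $\phi',\phi''$ linearly independent need not exist even when $\phi(I)$ fails to be a segment (e.g.\ cusps where $\phi'$ vanishes, or a $C^{2}$ curve tracing infinitely many short segments separated by stationary points), and the arc-length reparametrisation you suggest is not $C^{2}$ across zeros of $\phi'$, so that normalisation does not close the case analysis either.
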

\begin{proof} The proof directly follows from
   Theorems~\ref{theorem:main} and~\ref{theorem:eguali}.
\end{proof}

\begin{proposition}
\label{proposition:Poly}
For every polyhedron
 $P\subseteq \Rn$ the Riesz space
  $\mathcal R (P)$ is strongly semisimple,
  and $P$ has no outgoing tangent. 
\end{proposition}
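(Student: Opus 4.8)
The plan is to derive both assertions from Theorem~\ref{theorem:main}. Since a polyhedron is a finite union of simplexes, $P$ is a nonempty compact subset of $\Rn$ (the degenerate case $P=\emptyset$ being trivial), so Theorem~\ref{theorem:main} says that $\mathcal R(P)$ is strongly semisimple precisely when $P$ has no outgoing tangent. Hence it suffices to prove that $P$ has no outgoing tangent.

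I would argue by contradiction. Assume $u=(u_1,\ldots,u_k)$ is an outgoing tangent of $P$ at a point $x\in P$: by Definition~\ref{definition:tangent} there is a sequence $\{x_i\}\subseteq P$ converging to $x$ whose Frenet $k$-frame is $u$, together with reals $\lambda_1,\ldots,\lambda_k>0$ for which the simplex $C=\conv(x,x+\lambda_1u_1,\ldots,x+\lambda_1u_1+\cdots+\lambda_ku_k)$ and its facet $C'=\conv(x,x+\lambda_1u_1,\ldots,x+\lambda_1u_1+\cdots+\lambda_{k-1}u_{k-1})$ satisfy $C\cap P=C'\cap P$. Fix a triangulation $\Delta$ of $P$ (\cite[2.1.5]{sta}). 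As $\Delta$ is finite, some $T\in\Delta$ contains infinitely many of the $x_i$; the corresponding subsequence still converges to $x$, it still lies in the closed set $T$ so that $x\in T$, and its Frenet $k$-frame is still $u$ since that frame is defined purely through limits. Theorem~\ref{theorem:contains-u-simplex} then provides $\mu_1,\ldots,\mu_k>0$ with $M:=\conv(x,x+\mu_1u_1,\ldots,x+\mu_1u_1+\cdots+\mu_ku_k)\subseteq T\subseteq P$.

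The crux is the intersection $C\cap M$. Because $u_1,\ldots,u_k$ are pairwise orthogonal unit vectors, hence linearly independent, Proposition~\ref{proposition:filterbase} applies and shows $C\cap M=\conv(x,x+\nu_1u_1,\ldots,x+\nu_1u_1+\cdots+\nu_ku_k)$ for some $\nu_1,\ldots,\nu_k>0$. The point $z=x+\nu_1u_1+\cdots+\nu_ku_k$ then lies in $C\cap M\subseteq C\cap T\subseteq C\cap P$, hence in $C'\cap P\subseteq C'$. But $C'$ is contained in the affine subspace $x+\mathbb Ru_1+\cdots+\mathbb Ru_{k-1}$, while $z-x=\nu_1u_1+\cdots+\nu_ku_k$ has $u_k$-coordinate $\nu_k\neq 0$ relative to the orthonormal system $u_1,\ldots,u_k$; so $z\notin x+\mathbb Ru_1+\cdots+\mathbb Ru_{k-1}$, a contradiction. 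Thus $P$ has no outgoing tangent, and Theorem~\ref{theorem:main} yields that $\mathcal R(P)$ is strongly semisimple.

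I do not anticipate a genuine obstacle: the argument is a short assembly of Theorem~\ref{theorem:contains-u-simplex}, Proposition~\ref{proposition:filterbase} and Theorem~\ref{theorem:main}. The only points needing care are the routine verifications that a subsequence inherits the Frenet $k$-frame and that the two simplexes $C$ and $M$ have exactly the shape required by Proposition~\ref{proposition:filterbase}; the geometric content is simply that the $k$-dimensional sliver $C\cap M$ persists inside $P$ arbitrarily close to $x$, which is impossible once $C\cap P=C'\cap P$ forces everything near $x$ into the lower-dimensional facet $C'$.
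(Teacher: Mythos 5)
Your proof is correct and follows essentially the same route as the paper's: reduce to a single simplex of a triangulation containing infinitely many terms of the determining sequence, invoke Theorem~\ref{theorem:contains-u-simplex} to conclude the tangent is not outgoing, and then apply Theorem~\ref{theorem:main}. The only difference is that you make explicit, via Proposition~\ref{proposition:filterbase} and the point $z$, the step the paper compresses into ``By Theorem~\ref{theorem:contains-u-simplex}, $u$ is not an outgoing tangent of $S_i$,'' which is a welcome clarification rather than a deviation.
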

\begin{proof}
For some finite set 
$\{S_1,\ldots,S_m\}$
 of simplexes in $\Rn$ we can write 
$P=S_1\cup\cdots\cup S_m$.
If  $u$ is a tangent of $P$ at some point $x\in P$
then     $\,\,u$  is also 
  a tangent  
  of  $S_i$ at $x$   for some $i=1,\dots,m$.
 By Theorem~\ref{theorem:contains-u-simplex},  
$u$ is not an outgoing tangent of $S_i$.
Thus  $u$ is not an outgoing tangent of $P$. Now 
apply Theorem~\ref{theorem:main}.
\end{proof}

The following is an example of
a strongly semisimple Riesz space $\mathcal R(X)$, 
 where $X$ is not a polyhedron:

\begin{example}
\label{counterexample}
Let the set  $X\subseteq \mathbb R^{2}$ be defined by 
\[
X=\textstyle\{(0,0)\}\cup\{({1}/{n},0)
\mid n = 1,2,\ldots\,\}\cup \{({1}/{n},{1}/{n^2})
\mid n = 1,2,\ldots\,\}.\]
The origin  $(0,0)$ is the only accumulation point of $X$.
The only tangents of $X$ 
 are given by the vector
$(1,0)$ and the pair of vectors $((1,0),(0,1))$. Therefore, $X$ has no outgoing tangents. By  Theorem~\ref{theorem:main}, the
Riesz space
 $\mathcal R(X)$ is strongly semisimple.
\end{example}
However, when the compact set
$X\subseteq\Rn$ is convex we have: 
\begin{theorem}
\label{theorem:convex}
Let  $X\subseteq\Rn$ be a nonempty compact convex set.
Then the following conditions are equivalent:
\begin{itemize}
\item[(I)]
The Riesz space $\mathcal R (X)$ is strongly semisimple.

\item[(II)]
  $X=\conv(x_1,\ldots,x_m)$ for some $x_1,\ldots,x_m\in \Rn$, i.e.,
  $X$ is a polyhedron.

\item[(III)]
$X$ has no outgoing tangent.

\end{itemize}
\end{theorem}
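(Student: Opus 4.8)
\textbf{Proof proposal for Theorem~\ref{theorem:convex}.}

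The plan is to prove the cycle of implications $(\mathrm{II})\Rightarrow(\mathrm{III})\Rightarrow(\mathrm{I})\Rightarrow(\mathrm{II})$. The first two implications are already essentially available: $(\mathrm{II})\Rightarrow(\mathrm{III})$ is exactly Proposition~\ref{proposition:Poly}, and $(\mathrm{III})\Rightarrow(\mathrm{I})$ is the contrapositive of the implication $(\mathrm{i})\Rightarrow(\mathrm{ii})$ in Theorem~\ref{theorem:main} (if $X$ has no outgoing tangent then $\mathcal R(X)$ is strongly semisimple). Neither of these uses convexity. So the entire content of the theorem is the remaining implication $(\mathrm{I})\Rightarrow(\mathrm{II})$, or equivalently its contrapositive: \emph{if the compact convex set $X$ is not a polyhedron, then $\mathcal R(X)$ is not strongly semisimple.} By Theorem~\ref{theorem:main} it suffices to exhibit an outgoing tangent of $X$ at some boundary point.

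Here is the approach for that contrapositive. Assume $X\subseteq\Rn$ is compact, convex, and not a polyhedron. First reduce to the case where $X$ is full-dimensional by passing to $\aff(X)$ (an affine subspace PL-isomorphic to some $\mathbb R^d$); a polyhedron in $\aff(X)$ is a polyhedron in $\Rn$, so nothing is lost. A compact convex full-dimensional set that is not a polyhedron has a boundary point $x$ at which the boundary fails to be locally ``flat'' in the polyhedral sense. I would use the classical fact that a compact convex set is a polytope iff it has finitely many extreme points (which is implication $(\mathrm{III})$'s companion statement alluded to in the Introduction and is standard, e.g.\ Minkowski--Weyl); since $X$ is not a polytope it has infinitely many extreme points, and by compactness these accumulate at some point $x\in X$ (necessarily itself a boundary point). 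Pick a sequence $\{x_i\}$ of distinct extreme points with $x_i\to x$. Passing to a subsequence we may assume the Frenet $1$-frame $u_1=\lim (x_i-x)/\|x_i-x\|$ exists, and then, iterating through higher Frenet frames exactly as in the Claim inside the proof of Theorem~\ref{theorem:main}, we extract a further subsequence admitting a full Frenet $k$-frame $u=(u_1,\ldots,u_k)$ for the largest $k$ that survives; thus $u$ is a tangent of $X$ at $x$.

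The main obstacle---and the step I would spend the most care on---is showing this tangent $u$ can be taken \emph{outgoing}, i.e.\ that for suitable $\lambda_1,\ldots,\lambda_k>0$ the simplex $C=\conv(x,x+\lambda_1u_1,\ldots,x+\lambda_1u_1+\cdots+\lambda_ku_k)$ meets $X$ only in its facet $C'=\conv(x,x+\lambda_1u_1,\ldots,x+\lambda_1u_1+\cdots+\lambda_{k-1}u_{k-1})$. The idea is to use that each $x_i$ is an extreme point: there is a supporting hyperplane $H_i$ of $X$ with $H_i\cap X=\{x_i\}$, with unit outer normal $n_i$, so $\langle n_i, y-x_i\rangle\le 0$ for all $y\in X$ and strict for $y\ne x_i$. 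Passing to a subsequence, $n_i\to n$ for some unit vector $n$, and $\langle n, y-x\rangle\le 0$ for all $y\in X$, i.e.\ $n$ is an outer normal at $x$ too. The key geometric estimate is to relate $n$ to the last Frenet vector $u_k$: because the $x_i$ are extreme and the supporting normals $n_i$ converge, one shows $\langle n, u_j\rangle = 0$ for $j<k$ while $\langle n, u_k\rangle < 0$ (the "curvature" in the $u_k$-direction forces the normal to have a strictly negative $u_k$-component); this is the analogue, for a sequence of extreme points, of the fact that the Frenet binormal is orthogonal to the osculating plane. Granting $\langle n, u_k\rangle<0$, for any point $z = x + \sum_j \xi_j u_j \in C$ with $\xi_k>0$ we get $\langle n, z-x\rangle = \xi_k\langle n,u_k\rangle < 0$, so $z$ lies strictly on the near side of the supporting hyperplane $\{y:\langle n,y-x\rangle=0\}$; shrinking the $\lambda_j$ so that the whole simplex $C$ stays within a neighborhood of $x$ where this strict inequality already excludes membership in $X$ except on $C'$ (where $\xi_k=0$), and using compactness/continuity to make the bound uniform, yields $C\cap X \subseteq C' \cap X$, hence $=$ since $C'\subseteq C$. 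This gives an outgoing tangent, so by Theorem~\ref{theorem:main} $\mathcal R(X)$ is not strongly semisimple, completing $(\mathrm{I})\Rightarrow(\mathrm{II})$ and the cycle.
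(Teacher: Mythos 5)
Your reduction of the theorem to the single implication (I)$\Rightarrow$(II) is fine, and your handling of (II)$\Rightarrow$(III)$\Rightarrow$(I) via Proposition~\ref{proposition:Poly} and Theorem~\ref{theorem:main} matches the paper. The gap is in the core step: your argument that the Frenet $k$-frame extracted from a sequence of extreme points is \emph{outgoing} does not work, and in fact the tangent you construct need not be outgoing at all. The decisive sign is wrong: if $n$ is the limit of \emph{outer} normals, the supporting condition is $\langle n, y-x\rangle\le 0$ for all $y\in X$, so a point $z=x+\sum_j\xi_ju_j$ with $\langle n,z-x\rangle=\xi_k\langle n,u_k\rangle<0$ lies on the \emph{same} side of the hyperplane as $X$; nothing is excluded. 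Worse, this cannot be repaired by flipping conventions: since each $u_j$ is a limit of (projections of) directions $x_i-x$ with $x_i\in X$, one checks inductively that $\langle n,u_j\rangle\le 0$ for every $j$ and every supporting normal $n$ at $x$, so the whole simplex $C$ always lies weakly inside every supporting half-space at $x$ --- a supporting hyperplane at $x$ can never certify outgoingness. A concrete counterexample to your conclusion: for the unit disk in $\mathbb R^2$ with extreme points approaching $x=(1,0)$ from above, the Frenet $2$-frame is $u=((0,1),(-1,0))$, your ``largest $k$ that survives'' is $k=2$, and $\langle n,u_2\rangle=-1<0$ as you predict, yet $u$ is \emph{not} outgoing: the triangle $C$ contains points $(1-c\lambda_2,\,c\lambda_1)$ lying in the disk for all small $c>0$. (Only the $1$-frame is outgoing there, so your stopping rule also selects the wrong $k$. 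A smaller issue: extreme points need not be exposed, so the hyperplanes $H_i$ with $H_i\cap X=\{x_i\}$ require an appeal to Straszewicz's theorem.)

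The paper sidesteps this difficulty by \emph{not} trying to exhibit an outgoing tangent directly. It argues by contradiction: assuming $\mathcal R(X)$ strongly semisimple, Theorem~\ref{theorem:main} says \emph{no} tangent is outgoing, and this negative information is used constructively. First (Claim 1) a subsequence of extreme points is chosen whose Frenet $k$-frame $(u_1,\ldots,u_k)$ satisfies the extra condition $\aff(x_{m_1},x_{m_2},\ldots)=x+\mathbb Ru_1+\cdots+\mathbb Ru_k$. Then (Claim 2) non-outgoingness at each stage $l$, combined with convexity of $X$, yields a full $k$-simplex $C_k=\conv(x,x+\lambda_1u_1,\ldots,x+\lambda_1u_1+\cdots+\lambda_ku_k)$ contained in $X$. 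The affine-hull condition forces a tail of the extreme-point sequence into $C_k$; extreme points of $X$ lying in the convex subset $C_k\subseteq X$ are extreme points of $C_k$, but a simplex has only finitely many, a contradiction. If you want to salvage your direct approach you would need a genuinely different mechanism for proving outgoingness (and a correct choice of $k$); the supporting-hyperplane-at-$x$ route is structurally incapable of doing it.
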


\begin{proof}
(III)$\Leftrightarrow$(I)  This is a particular case of Theorem
\ref{theorem:main}. 
(II)$\Rightarrow$(I)
By  Proposition~\ref{proposition:Poly}.
(I)$\Rightarrow$(II)
Arguing by way of contradiction,
assume
$\mathcal R (P)$ to be strongly semisimple, but
  $X\neq\conv(x_1,\ldots,x_m)$  for any 
  finite set $\{x_1,\ldots,x_m\} \subseteq  \Rn$. 
   Letting
  ${\rm ext}( X )$ denote the set of extreme point of $X$,
  Minkowski theorem yields the identity
    $X=\conv({\rm ext}( X ))$.
Since   $X$ is compact, there exists
a point $x\in X$ together with
 a sequence $x_1,x_2,\ldots$ of extreme points of $X$
 such that $\lim_{i\to \infty} x_i=x$ and $x_i\neq x_j$ for every $i\neq j$.

\bigskip
 \noindent
{\it Claim 1.} There exists a subsequence $x_{m_1},x_{m_2},\ldots$ of the sequence $x_1,x_2,\ldots$, together with 
a $k$-tuple $(u_1,\ldots,u_{k})$
of pairwise orthogonal unit vectors
 in $\Rn$
 (for some $k\in\{1,\dots,n\}$), 
  having the following properties: 
 
\begin{itemize}
\item[(a)] $x_{m_1},x_{m_2},\ldots$ 
determines the tangent $(u_1,\ldots,u_{k})$ of $X$ at $x$, 
in the sense of Definition \ref{definition:tangent}.

\smallskip
\item[(b)] $\aff(x_{m_1},x_{m_2},\ldots)=x+\mathbb Ru_1+\cdots+\mathbb Ru_{k}.$
\end{itemize}

\bigskip
The  vectors  $u_1,u_2,\dots,u_k$ are constructed by the following
inductive procedure:  

\smallskip

\noindent{\it Basis:} 
Since $x_i\neq x_j$ for each $i\neq j$,  then
each unit vector $({x_i-x})/{||x_i-x||}$ is well defined.
 There is a subsequence $x_{m^1_1},x_{m^1_2},\ldots$ of  $x_1,x_2,\ldots$ and a unit vector $u_1\in\Rn$ such that 
$
\lim_{i\to\infty}
({x_{m^1_i}-x})/{||x_{m^1_i}-x||} =u_1.
$
Then $u_1$ is a tangent of $X$ at $x$ 
determined by $x_{m^1_1},x_{m^1_2},\ldots$.
\medskip

\noindent{\it Induction Step:}
Let    $ l\geq 1$ and assume the  
subsequence $x_{m^l_1},x_{m^l_2},\ldots$ of  
$x_1,x_2,\ldots$ determines
the tangent $(u_1,\ldots,u_{ l})$ of $X$ at $x$. 
If there exists an integer $r$ such that 
$\aff(x_{m^{ l}_r},x_{m^{ l}_{r+1}},\ldots)=x+\mathbb Ru_1+\cdots+\mathbb Ru_{ l},$ then upon setting $k=l$, we are done. 
If no such $r$ exists,
  infinitely many vectors in $x_{m^ l_1},x_{m^ l_2},\ldots$ 
do not belong to the affine space 
$x+\mathbb Ru_1+\cdots+\mathbb Ru_{ l}$. 
Therefore, for some
 subsequence  $x_{m^{ l+1}_1},x_{m^{ l+1}_{2}},\ldots$ and  unit vector $u_{ l+1}\in\Rn$ 
 we can write
\begin{equation}
\label{equation:due}
u_{ l+1}=\lim _{i\to \infty}\frac{
x_{m^{l+1}_i}-x-
\mathsf{proj}_{\mathbb Ru_1+\cdots+\mathbb Ru_{l}}(x_{m^{ l+1}_i}-x)
}
{||x_{m^{l+1}_i}-x-
\mathsf{proj}_{\mathbb Ru_1+\cdots+\mathbb Ru_{l}}(x_{m^{ l+1}_i}-x)||}.\end{equation}
We then proceed with $(u_1,\ldots,u_{l+1})
$ in place of $(u_1,\ldots,u_{ l})$.
Since the affine space $\aff(x_{m_1},x_{m_2},\ldots)$ is 
contained in $ \Rn$, this
procedure must terminate for some  $1\leq k\leq n$.
Claim 1 is settled.

\bigskip

 Let us now fix a subsequence \,\,$x_{m_1},x_{m_2},\ldots$\,\, of \,\,$x_1,x_2,\ldots,$\,\, 
 together with a $k$-tuple $(u_1,\ldots,u_{k})$
of pairwise orthogonal unit vectors satisfying
conditions  (a) and (b) in Claim 1.

\bigskip

\noindent{\it  Claim 2.} There are  
$\lambda_1,\ldots, \lambda_k>0$  such that  the
$k$-simplex
$
C_k=\conv(x,x+\lambda_1u_1,\ldots, x+\lambda_1u_1+\cdots+\lambda_k u_k)\mbox{ is contained in } X.
$

\medskip
We have already observed that $x\in X$. By Theorem~\ref{theorem:main}, the tangent $u_1$ of $X$ at $x$ is not outgoing. Hence $\conv(x,x+u_1)\cap X\neq\{x\}$. Let $y\in (\conv(x,x+u_1)\cap X)\setminus\{x\}$.  Thus
 $y=x+\lambda_1 u_1$ for some $0<\lambda_1\leq 1$. 
 Since $X$ is convex, $\conv(x,x+\lambda_1 u_1)\subseteq X$.

Proceeding inductively, let us assume that 
  $\lambda_1,\ldots, \lambda_l>0$  are such that 
  the $l$-simplex
$
C_l=\conv(x,x+\lambda_1u_1,\ldots, x
+\lambda_1u_1+\cdots+\lambda_l u_l)
\mbox{ is contained in } X,
$
   for some $l\in\{1,\dots,k\}$. 
If $l=k$ we are done. If $l<k$
     let
$
C'_{l+1}=\conv(x,x+\lambda_1u_1,\ldots, x+\lambda_1u_1+\cdots+ \lambda_l u_l+ u_{l+1}).
$
By construction, 
     $(u_1,\ldots,u_{l+1})$ is a tangent of $X$ at $x$. 
 Since by hypothesis $\mathcal R(X)$ is strongly semisimple,  
 by  Theorem~\ref{theorem:main}  
  $(u_1,\ldots,u_{l+1})$ is not outgoing, whence
  there is  
  $
  y\in (C'_{l+1}\cap X)\setminus C_l.
  $
As a consequence, there 
are $\lambda'_1,\ldots, \lambda'_l>0$ and $\lambda_{l+1}>0$
such that   
$
y=x+\lambda'_1u_1+\cdots+ \lambda'_l u_l+\lambda_{l+1} u_{l+1}
\mbox{ and }\lambda'_i\leq \lambda_i 
\mbox{ for each } i\in\{1,\dots,l\}.
$
Since $X$ is convex,   the set 
$
\conv(x,\, x+\lambda'_1u_1,\ldots, x+\lambda'_1u_1
+\cdots+\lambda'_l u_l,\,\,y)
$ is contained in $X$. 
Setting now  (without loss of generality)
 $\lambda_i=\lambda_i',$ we obtain the inclusion
$
C_{l+1}=\conv(x,x+\lambda_1u_1,\ldots, x
+\lambda_1u_1+\cdots+\lambda_{l+1} u_{l+1})
\subseteq  X,
$
thus completing the inductive step.
This procedure terminates after  $k$ steps.  Claim 2 is settled.

\medskip


Since the $k$-simplex
$C_k$ is contained in the affine space 
$\,\, \aff(x_{m_1},x_{m_2},\ldots),\,\,$  and 
$(u_1,\ldots,u_k)$ is the Frenet $k$-frame of the sequence
$x_{m_1},x_{m_2},\ldots$,
the exists
  an integer $r^*>0$ such that 
$x_{m_j}\in C_k$ for each $j= r^*, r^*+1,\dots$.
 By definition,   $x_{m_1},x_{m_2},\ldots\in {\rm ext} (X)$.  
 By Claim 2, $C_k\subseteq X$.
Thus  
  $x_{m_{r^*}},x_{m_{r^*+1}},\ldots\in  {\rm ext} (C_k)$. 
Since $x_i\neq x_j$ for every $i\neq j$, then the set 
 ${\rm ext} (C_k)$ must be infinite, a contradiction.
The proof is complete.
\end{proof}

\bibliographystyle{plain}

\begin{thebibliography}{10}



\bibitem{balmar}
  R.N. Ball, V. Marra,
Unital hyperarchimedean vector lattices,
  (preprint, arXiv:1310.2175v1,   8 Oct 2013).


 
 
 
\bibitem{bou}
  H. Bouligand, 
  Sur les surfaces d\'epourvues de points hyperlimites, 
  Ann. Soc. Pol. Math.,
  9  (1930)  32--41.





 


\bibitem{busmun}
  M. Busaniche,  D. Mundici, 
  Bouligand-Severi tangents in MV-algebras, 
  Rev. Mat. Iberoam.,
  30.1 (2014), (to appear, preprint arxiv.org/abs/1204.2147v1)


%


%
%
%
%
%
%
%


%



 

%
 


\bibitem{jor}
C. Jordan,   
Sur la th\'eorie des courbes dans l'espace \'a  $n$ dimensions, C. R. Acad. Sci. Paris,  79 (1874) 795--797.
 
 

\bibitem{kue}
  W. K\"uhnel,
  Differential Geometry: Curves - Surfaces - Manifolds, 
  Second Edition,
  American Mathematical Society, 2005.
 
 
\bibitem{luxzaa}
  W.A.J. Luxemburg, A.C. Zaanen,
  Riesz Spaces, Vol. 1,  
  North-Holland Mathematical Library, 1971.
 
  
\bibitem{marzen}
  J. Mart\'inez, E.R. Zenk,
  Yosida frames, 
  J. Pure Appl. Algebra,
  204 (2006) 473--492.
 
\bibitem{mor}  
  B.S. Mordukhovich,
  Variational Analysis and Generalized Differentiation I: Basic Theory,
  Springer-Heidelberg, 2006.
 
 
  
\bibitem{mun-jfa}
  D. Mundici, 
  Interpretation of {A}{F} ${C}\sp \ast$-algebras in {{\L}}ukasiewicz 
  sentential calculus,
  J. Funct. Anal., 
  65 (1986) 15--63.


%


 
%
%

  
\bibitem{sev0}
  F. Severi, 
  Conferenze di geometria algebrica
  (Collected by B. Segre),
  Stabilimento tipo-litografico del Genio Civile,
  Roma, 1927, and Zanichelli, Bologna, 1927--1930.
  
\bibitem{sev}
  F. Severi, 
  Su alcune questioni di topologia infinitesimale, 
  Ann. Soc. Pol. Math., 
  9  (1931) 97--108.


 

\bibitem{sta}
  J. R. Stallings,
  Lectures on Polyhedral Topology,
  Tata Institute of Fundamental Research,
  Mumbay, 1967.



  
  
\end{thebibliography}

  \end{document}